\title{\bf Quasi-invariance of the stochastic flow associated to It\^o's SDE
with singular time-dependent drift}
\author{Dejun Luo\footnote{Email: luodj@amss.ac.cn. Partly supported by the Key Laboratory of
RCSDS, CAS (2008DP173182), NSFC (11101407) and AMSS (Y129161ZZ1)}
\vspace{3mm}\\
{\footnotesize Institute of Applied Mathematics, Academy of Mathematics and Systems Science,}\\
{\footnotesize Chinese Academy of Sciences, Beijing 100190, China}
}
\date{}
\def\R{\mathbb{R}}
\def\E{\mathbb{E}}
\def\<{\langle}
\def\>{\rangle}
\newcommand{\eps}{\varepsilon}
\def\F{\mathcal{F}}
\def\H{\mathbb{H}}
\def\L{\mathbf{L}}
\def\P{\mathbb{P}}
\def\div{\textup{div}}
\newtheorem{theorem}{Theorem}[section]
\newtheorem{lemma}[theorem]{Lemma}       
\newtheorem{proposition}[theorem]{Proposition}
\begin{document}

\maketitle
\makeatletter 
\renewcommand\theequation{\thesection.\arabic{equation}}
\@addtoreset{equation}{section}
\makeatother 

\vspace{-6mm}

\begin{abstract}
In this paper we consider the It\^o SDE
  $$d X_t=d W_t+b(t,X_t)\,d t, \quad X_0=x\in\R^d,$$
where $W_t$ is a $d$-dimensional standard Wiener process and the drift coefficient
$b:[0,T]\times\R^d\to\R^d$ belongs to $L^q(0,T;L^p(\R^d))$
with $p\geq 2, q>2$ and $\frac dp +\frac 2q<1$.
In 2005, Krylov and R\"ockner \cite{KR05} proved that the above equation has a unique
strong solution $X_t$. Recently it was shown by Fedrizzi and Flandoli \cite{FF13b} that
the solution $X_t$ is indeed a stochastic flow of homeomorphisms on $\R^d$. We prove
in the present work that the Lebesgue measure is quasi-invariant under the flow $X_t$.
\end{abstract}

{\bf MSC2000:} 60H10

{\bf Key words:} Stochastic differential equation, strong solution, flow of homeomorphisms,
quasi-invariance, Zvonkin-type transformation

\section{Introduction}

Let $\sigma:[0,T]\times\R^d\to\R^m\otimes\R^d$ and $b:[0,T]\times\R^d\to\R^d$ be measurable
functions, and $W_t$ a standard Wiener process in $\R^m$. Consider the It\^o SDE
  \begin{equation}\label{sect-1.0}
  d X_t=\sigma(t,X_t)\,d W_t+b(t,X_t)\,d t, \quad X_0=x\in\R^d.
  \end{equation}
It is a classical result that if the coefficients $\sigma$ and $b$ are globally
Lipschitz continuous with respect to the spatial variable (uniformly in $t$), then
the solution $X_t$ to \eqref{sect-1.0} constitutes a stochastic flow of homeomorphisms.
In the past years, weaker conditions on the modulus of continuity of $\sigma$ and $b$,
such as log-Lipschitz continuity \cite{FL07, Zhang05a}, have been found which still
ensure the existence of a flow of homeomorphisms.

When the diffusion coefficient $\sigma$ is uniformly non-degenerate, the equation
\eqref{sect-1.0} may have pathwise uniqueness under quite weak conditions on the drift $b$.
The first result in this direction is due to Veretennikov \cite{V79}, which says that if
$\sigma(t,\cdot)$ is bounded Lipschitz continuous and satisfies a non-degeneracy condition,
then the SDE \eqref{sect-1.0} admits a unique strong solution once $b$ is bounded
measurable. In \cite{GM01}, Gy\"ongy and Martinez generalized this result to the case where
$\sigma(t,\cdot)$ is locally Lipschitz continuous, and the drift coefficient $b$ is dominated by
the sum of a positive constant and an integrable function. Their method relies on a convergence
result of the solutions of approximating SDEs to that of the limiting SDE, which follows from
the Krylov estimate. X. Zhang improved their results in \cite{Zhang05b} by replacing the
locally Lipschitz continuity of $\sigma(t,\cdot)$ with some integrability condition.

In the influential paper \cite{KR05}, Krylov and R\"ockner considered the case where
$\sigma\equiv Id$ (the identity matrix of order $d$, hence $W_t$ is now a $d$-dimensional
standard Wiener process) and the drift $b:[0,T]\times\R^d\to\R^d$ satisfies
  \begin{equation}\label{sect-1.1}
  \int_0^T\bigg(\int_{\R^d}|b(t,x)|^p\,d x\bigg)^{\frac qp}d t<+\infty
  \end{equation}
with $p\geq 2, q> 2$ such that
  \begin{equation}\label{sect-1.2}
  \frac dp +\frac 2q<1.
  \end{equation}
Hence the It\^o SDE becomes
  \begin{equation}\label{SDE}
  d X_t=d W_t+b(t,X_t)\,d t, \quad X_0=x\in\R^d.
  \end{equation}
They proved that the above equation has a unique strong solution by using Yamada--Watanabe's
criterion: existence of weak solution plus pathwise uniqueness implies the existence of a
unique strong solution. The regularity properties of functions in the Sobolev space $H_{2,p}^q(T)$
(see the next section for its definition) play an important role in the proof of pathwise
uniqueness of \eqref{SDE}. Recently, Fedrizzi and Flandoli proved that the solution $X_t$ is indeed
a stochastic flow of homeomorphisms on $\R^d$ (see \cite[Theorem 1.2]{FF13b}). Moreover, when
$v_0\in \cap_{r\geq 1} W^{1,r}(\R^d)$, they showed in \cite{FF13a} that $v(t,x):=v_0\big(X^{-1}_t(x)\big)$
is the unique weakly differentiable solution to the SPDE
  $$d v+\<b,\nabla v\>\,d t+\<\nabla v,d W_t\>=\frac12\Delta v\,d t,\quad v|_{t=0}=v_0,$$
where $\<\cdot,\cdot\>$ is the inner product in $\R^d$ (cf. \cite{MNP} for related studies).
When the dimension $d=1$ and the drift $b$ is time-independent, Aryasovay and Pilipenko
\cite{AP} obtained the Sobolev regularity of the flow $X_t$ under the assumptions that
$b$ has linear growth and locally finite variation. We mention that similar
problems were studied in \cite{FGP10a} when $b\in L^\infty(0,T; C_b^\alpha(\R^d,\R^d))$
for some $\alpha\in(0,1)$ (cf. \cite{FGP10b} for non-constant diffusion coefficients).
X. Zhang proved in \cite{Zhang11} the stochastic homeomorphism flow property for the SDE
\eqref{sect-1.0} with uniformly non-degenerate diffusion coefficient.

Our purpose in the present work is to show that the Lebesgue measure is quasi-invariant
under the flow $X_t$ generated by \eqref{SDE} with $b$ satisfying \eqref{sect-1.1}.
Here the quasi-invariance means that, almost surely, the push-forward of the Lebesgue
measure by the flow is equivalent to itself.
Recall that Fedrizzi and Flandoli \cite{FF11} proved in this case that, almost surely,
$X_t\in C^\alpha(\R^d,\R^d)$ for any $\alpha\in(0,1)$; moreover, the flow
$X_t$ is weakly differentiable in the following sense (cf. \cite[Theorem 1.2]{FF13b}):
for any $x\in\R^d$, the limit
  $$\lim_{h\to0}\frac{X_\cdot(x+he_i)-X_\cdot(x)}h$$
exists in $L^2(\Omega\times[0,T],\R^d)$, where $\{e_1,\ldots,e_d\}$ is the canonical basis
of $\R^d$. However, these regularity properties of the flow $X_t$ are not sufficient to conclude
that the Lebesgue measure is quasi-invariant under the action of $X_t$. We would like to
mention that the existence and uniqueness of generalized stochastic flow associated to
SDEs with coefficients in Sobolev spaces are studied in \cite{Zhang10, FLT10, Zhang13, Luo13},
showing that the reference measure is quasi-invariant under the flow when
the divergence and gradient of coefficients fulfill suitable (exponential) integrability.

To state the main result of this work, we denote by $\L^d$ the Lebesgue measure on $\R^d$
and $(X_t)_\#\L^d:=\L^d\circ X_t^{-1}$ the push-forward of $\L^d$ by the flow $X_t$.

\begin{theorem}\label{sect-1-thm}
Let $b:[0,T]\times\R^d\to\R^d$ be a time-dependent vector field such that \eqref{sect-1.1}
holds with $p\geq 2$ and $q>2$ satisfying \eqref{sect-1.2}. Then for all $t\in[0,T]$,
$(X_t)_\#\L^d$ is equivalent to $\L^d$ almost surely; in other words, the Lebesgue measure is
quasi-invariant under the stochastic flow $X_t$ of homeomorphisms generated by \eqref{SDE}.
\end{theorem}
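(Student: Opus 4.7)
My plan is to remove the singularity of $b$ by a Zvonkin-type transformation that conjugates \eqref{SDE} into an SDE with bounded, Sobolev-regular coefficients, and then extract the density of the push-forward by smooth approximation together with an explicit exponential Jacobian formula. Concretely, for $\lambda>0$ large, I would solve coordinatewise the backward vector-valued PDE
$$\partial_t U + \tfrac12\Delta U + b\cdot\nabla U - \lambda U = -b, \qquad U(T,\cdot)=0.$$
Under \eqref{sect-1.1}--\eqref{sect-1.2}, Krylov's parabolic theory in $H^q_{2,p}(T)$ (already exploited in \cite{KR05,FF13b}) produces a solution $U$ that is H\"older continuous in space with $\|\nabla U\|_\infty\leq 1/2$ once $\lambda$ is large enough, so that $\Phi_t:=I+U(t,\cdot)$ is a bi-Lipschitz homeomorphism of $\R^d$. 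Applying the It\^o--Krylov formula to $Y_t:=\Phi_t(X_t)$ cancels the singular drift and yields
$$dY_t = \tilde\sigma(t,Y_t)\,dW_t + \tilde b(t,Y_t)\,dt,$$
with $\tilde\sigma=(I+\nabla U)\circ\Phi_t^{-1}$ and $\tilde b=\lambda U\circ\Phi_t^{-1}$. These new coefficients are bounded, $\tilde\sigma$ is uniformly non-degenerate, and both admit weak spatial derivatives in $L^q(0,T;L^p(\R^d))$.

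\textbf{Smooth approximation and Jacobian formula.} Next I would mollify $b$ into $b^\eps\in C_c^\infty$ and propagate this through the PDE, obtaining smoothed objects $U^\eps,\Phi^\eps,\tilde\sigma^\eps,\tilde b^\eps$ and a classical stochastic flow of diffeomorphisms $Y_t^\eps$. Its Jacobian $J_t^\eps(x):=\det\nabla Y_t^\eps(x)$ is strictly positive and given by the standard exponential formula
$$J_t^\eps(x) = \exp\bigg(\sum_k \int_0^t \textup{tr}(\nabla\tilde\sigma_k^\eps)(s,Y_s^\eps(x))\,dW_s^k + \int_0^t \Bigl[\div\tilde b^\eps - \tfrac12 \sum_k \textup{tr}\bigl((\nabla\tilde\sigma_k^\eps)^2\bigr)\Bigr](s,Y_s^\eps(x))\,ds\bigg),$$
so that $(Y_t^\eps)_\#\L^d=\rho_t^\eps\,\L^d$ with $\rho_t^\eps=1/J_t^\eps\circ(Y_t^\eps)^{-1}$, almost surely strictly positive. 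By the Krylov--R\"ockner/Fedrizzi--Flandoli stability theory one has $Y_t^\eps\to Y_t$ uniformly in $t$ in probability, and the continuous mapping theorem gives $(Y_t^\eps)_\#\L^d\to(Y_t)_\#\L^d$ weakly as random measures.

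\textbf{Uniform exponential estimates and transfer.} The crux is to establish a uniform-in-$\eps$ bound $\sup_\eps \E\bigl[(\rho_t^\eps(y))^r+(\rho_t^\eps(y))^{-r}\bigr]\leq C(r)$ for some $r>1$, locally in $y$. Because $\nabla\tilde\sigma^\eps$ and $\nabla\tilde b^\eps$ are uniformly bounded in $L^q(0,T;L^p(\R^d))$ by the $\eps$-independent parabolic estimates, Krylov's $L^qL^p$-estimate for the non-degenerate diffusion $Y_s^\eps$ controls $\int_0^t|\nabla\tilde\sigma^\eps|^2(s,Y_s^\eps)\,ds$ and $\int_0^t|\div\tilde b^\eps|(s,Y_s^\eps)\,ds$ in every $L^r(\Omega)$; a Khasminskii-type lemma then boosts these to exponential moments, and a localized Novikov argument handles the stochastic-integral factor. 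A Dunford--Pettis argument identifies the weak limit of $\rho_t^\eps$ as a density $\rho_t$ of $(Y_t)_\#\L^d$ with respect to $\L^d$, and the symmetric estimate on $(\rho_t^\eps)^{-r}$ ensures $\rho_t>0$ a.s., i.e., quasi-invariance under $Y_t$. Since $\Phi_t$ is bi-Lipschitz, its Jacobian and inverse Jacobian lie in $L^\infty$, hence $(X_t)_\#\L^d=(\Phi_t^{-1})_\#(Y_t)_\#\L^d\sim\L^d$ almost surely, proving Theorem \ref{sect-1-thm}.

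\textbf{Main obstacle.} The delicate step is the uniform exponential-moment control: the integrands $\nabla\tilde\sigma^\eps$ only sit in $L^qL^p$, not $L^\infty$, so Novikov's criterion is not applicable in its bare form. Controlling $\sum_k\int_0^t\textup{tr}(\nabla\tilde\sigma_k^\eps)(s,Y_s^\eps)\,dW_s^k$ uniformly in $\eps$ will require a careful combination of Krylov's $L^qL^p$-estimate along the non-degenerate diffusion $Y^\eps$ with a stopping-time--Khasminskii argument, and this forms the technical core of the proof.
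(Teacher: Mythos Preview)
Your proposal is correct and follows essentially the same route as the paper: Zvonkin transformation to the conjugated SDE for $Y_t$, smooth approximation, the explicit exponential Jacobian formula, uniform (two-sided) moment bounds on the densities via the Krylov--Khasminskii exponential estimate, passage to the limit, and transfer back through the bi-Lipschitz map $\Phi_t$. The obstacle you single out is exactly the crux in the paper as well, and it is resolved precisely as you anticipate---the exponential integrability $\sup_x \E\exp\big(k\int_0^T |f(t,Y^\eps_t)|^2\,dt\big)<\infty$ for $f\in L^q_p(T)$ (inherited from the analogous estimate along $X^\eps$, cf.\ Theorem~\ref{sect-2-thm-1}(ii)) makes Novikov's criterion directly applicable; the only cosmetic difference is that the paper pushes the convergence of the exponent itself (hence obtains an explicit positive exponential formula for the limiting density), whereas you extract positivity from the uniform negative-moment bound.
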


We point out that it is indeed not difficult to show that, almost surely, the push-forward
$(X_t)_\#\L^d$ is absolutely continuous with respect to $\L^d$, based on the estimates
in \cite[Lemmas 3 and 5]{FF13a} (see the proof of Proposition 3.1 in the current paper).
The difficult part lies in the proof of that the Radon--Nikodym density $K_t:=\frac{d(X_t)_\#\L^d}
{d\L^d}$ is everywhere positive, that is, the two measures are equivalent. To achieve
this purpose, we shall make use of the Zvonkin-type transformation introduced in \cite{FF13a}
to get a new SDE which has more regular coefficients. We first prove that the Lebesgue
measure is quasi-invariant under the flow generated by this SDE,
then we transfer the quasi-invariance property to the original flow $X_t$.
Using this method, we do not need the existence of generalized divergence of the drift $b$,
in contrast to \cite[Theorem 1.1]{Luo11}.
The same idea does work to extend our result to the more general case
studied by X. Zhang \cite{Zhang11} (of course, we have to extend \cite[Lemmas 3 and 5]{FF13a}
to this setting). We don't want to do such technical extensions in this short note.

The organization of this paper is as follows. In Section 2 we recall some known results
which are critical for proving Theorem \ref{sect-1-thm}. In particular, we introduce the
Zvonkin-type transformation (also called It\^o--Tanaka trick) used by Fedrizzi and Flandoli
\cite{FF13a, FF13b} to prove the existence of unique strong solution to \eqref{SDE}. Following
the ideas in \cite{Luo09, Zhang10}, we first prove in Section 3 the quasi-invariance of the
flow $Y_t$ which is the strong solution to the transformed SDE \eqref{SDE-new}, then we transfer
this property to the solution $X_t$ through a $C^1$-diffeomorphism.

\section{Notations and preliminary results}

In this section we first introduce some notations of function spaces and then collect
some known results which are crucial for our paper. The main references are \cite{FF13a, FF13b}.

For $p\geq1$, $L^p(\R^d)$ denotes the usual space of (possibly vector-valued) functions
on $\R^d$ which are Lebesgue integrable of order $p$. Let
$f(t,x)$ be a function of time and space, we will use superscripts to characterize the
time-part of the norm and subscripts for the space-part: we will have $L_p^q(S,T)=
L^q(S,T;L^p(\R^d))$. For simplicity, $L_p^q(T):=L_p^q(0,T)$. We also need some notations
of Sobolev spaces: $W^{\alpha,p}(\R^d)$ is the usual Sobolev space, and
  $$\H_{\alpha,p}^q(T)=L^q(0,T;W^{\alpha,p}(\R^d)),\quad \H_p^{\beta,q}(T)=W^{\beta,q}(0,T;L^p(\R^d)).$$
Finally, $H_{\alpha,p}^q(T)=\H_{\alpha,p}^q(T)\cap \H_p^{1,q}(T)$.

Now we state the following result concerning the
existence and uniqueness of weak solutions to \eqref{SDE} (cf. \cite[Theorem 2.5 and
Corollary 2.6]{FF13b}).

\begin{theorem}\label{sect-2-thm-1}
Assume that $b\in L_p^q(T)$ with $p,q$ satisfying \eqref{sect-1.2}. Then
\begin{itemize}
\item[\rm(i)] for fixed $x\in\R^d$,
there exist processes $X_t,W_t$ defined for $t\in[0,T]$ on a filtered probability space
$(\Omega,\F,\F_t,\P)$ such that $W_t$ is a d-dimensional $(\F_t)$-Wiener process and $X_t$
is an $(\F_t)$--adapted, continuous, $d$-dimensional process for which
  \begin{equation}\label{sect-2-thm-1.1}
  \P\bigg(\int_0^T |b(t,X_t)|^2\,d t<\infty\bigg)=1
  \end{equation}
and almost surely, for all $t\in[0,T]$,
  $$X_t=x+W_t+\int_0^t b(s,X_s)\,d s.$$
\item[\rm(ii)] weak uniqueness holds for the equation \eqref{SDE} in the class of solutions
satisfying \eqref{sect-2-thm-1.1}; moreover, if $f\in L_{\tilde p}^{\tilde q}(T)$ with
$\frac d{\tilde p}+\frac 2{\tilde q}<1$, then for any $k\in\R$, there exists a constant
$C_f$ depending on $\|f\|_{L_{\tilde p}^{\tilde q}(T)}$ such that
  \begin{equation}\label{sect-2-thm-1.2}
  \sup_{x\in\R^d}\E\Big[e^{k \int_0^T|f(t,X_t)|^2\,d t}\Big]\leq C_f.
  \end{equation}
\end{itemize}
\end{theorem}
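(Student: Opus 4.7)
The plan is to establish the three claims---weak existence, weak uniqueness in the class \eqref{sect-2-thm-1.1}, and the exponential bound \eqref{sect-2-thm-1.2}---by combining Girsanov's theorem with the Zvonkin--It\^o--Tanaka transformation, both of which rest on a Krylov-type estimate upgraded exponentially by Khasminskii's lemma.

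For part (i), weak existence is natural via Girsanov. I would start from a reference probability space $(\Omega,\F,\F_t,\P_0)$ carrying a $d$-dimensional Brownian motion $W_t$, set $X_t:=x+W_t$, and verify Novikov's condition for $\rho_T:=\exp\bigl(\int_0^T b(s,X_s)\cdot d W_s - \tfrac12\int_0^T |b(s,X_s)|^2\,d s\bigr)$. The verification relies on the classical Gaussian Krylov bound $\E_{\P_0}\int_0^T g(s,x+W_s)\,d s \leq C\,\|g\|_{L_{p/2}^{q/2}(T)}$ applied to $g=|b|^2$; condition \eqref{sect-1.2} gives exactly $d/(p/2)+2/(q/2)<2$, which is the sharp threshold for this estimate. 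Iterating it on short sub-intervals via Khasminskii's lemma upgrades the linear bound to $\sup_x \E_{\P_0}\exp\bigl(k\int_0^T |b(s,x+W_s)|^2\,d s\bigr)<\infty$ for every $k\geq 0$. Novikov's condition is then verified, $\rho_T$ is a true martingale, and under $d\P:=\rho_T\,d\P_0$ the process $\tilde W_t:=W_t-\int_0^t b(s,X_s)\,d s$ is a Brownian motion, making $X_t$ a weak solution of \eqref{SDE} automatically satisfying \eqref{sect-2-thm-1.1}.

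For part (ii), weak uniqueness follows from the Zvonkin--It\^o--Tanaka trick (to be recalled in Section 2). For $\lambda>0$ sufficiently large, one solves the backward parabolic system $\partial_t u + \tfrac12\Delta u + b\cdot\nabla u - \lambda u = -b$, $u(T,\cdot)=0$, obtaining $u\in H_{2,p}^q(T)$ with $\|\nabla u\|_\infty$ arbitrarily small. Then $\phi(t,x):=x+u(t,x)$ is a $C^1$-diffeomorphism of $\R^d$ uniformly in $t$. Applying a generalized It\^o formula---justified by mollification together with the integrability \eqref{sect-2-thm-1.1}---transforms any weak solution $X_t$ into a weak solution $Y_t:=\phi(t,X_t)$ of an SDE whose drift and diffusion are Lipschitz continuous in space. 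Classical pathwise uniqueness for $Y_t$ transfers back through $\phi^{-1}$ to weak uniqueness for $X_t$. The exponential estimate \eqref{sect-2-thm-1.2} is obtained by the same Krylov/Khasminskii scheme, now applied to the weak solution $X_t$ rather than to $x+W_t$: since $f\in L_{\tilde p}^{\tilde q}(T)$ with $d/\tilde p+2/\tilde q<1$, the function $|f|^2$ lies in $L_{\tilde p/2}^{\tilde q/2}(T)$ with parabolic index strictly below $2$, and partitioning $[0,T]$ into intervals short enough that the Krylov bound for $X_t$ becomes $\leq (2k)^{-1}$ on each sub-interval yields \eqref{sect-2-thm-1.2} with a constant depending only on $\|f\|_{L_{\tilde p}^{\tilde q}(T)}$, uniformly in the starting point $x$.

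The principal obstacle is the parabolic regularity input to the Zvonkin transformation. Deriving $u\in H_{2,p}^q(T)$ with quantitative smallness of $\|\nabla u\|_\infty$ as $\lambda\to\infty$ requires the anisotropic $L^q(L^p)$ estimates of Krylov--R\"ockner together with a Sobolev embedding tuned to the ranges in \eqref{sect-1.2}; equally delicate is making the generalized It\^o formula rigorous for such $u$ (rather than $C^{1,2}$), which must be built by mollification and passage to the limit using the Krylov estimate for $X_t$ itself. Breaking the apparent circularity---Krylov's estimate for $X_t$ is needed to justify It\^o's formula, which in turn is used to derive uniqueness---requires proving the basic Krylov estimate independently for every weak solution in the class \eqref{sect-2-thm-1.1}, and this is the real technical core of the argument.
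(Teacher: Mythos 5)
The paper does not prove Theorem~\ref{sect-2-thm-1}; it imports it verbatim from Fedrizzi--Flandoli \cite[Theorem~2.5, Corollary~2.6]{FF13b}, whose proof in turn follows the Krylov--R\"ockner scheme \cite{KR05}. Your sketch is broadly aligned with that scheme: Girsanov from the driftless process, Krylov's Gaussian estimate, Khasminskii's exponential upgrade, and the Zvonkin--It\^o--Tanaka change of variables. The arithmetic $d/(p/2)+2/(q/2)<2 \Longleftrightarrow d/p+2/q<1$ and the identification of Theorem~\ref{sect-2-thm-2} and Lemma~\ref{sect-2-lem-1} as the required PDE inputs are both correct, and your observation that the Krylov bound must be proved for every weak solution in the class \eqref{sect-2-thm-1.1} (not only for $x+W_t$) identifies the genuine load-bearing step.

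There is, however, one concrete error in part~(ii): you assert that the transformed SDE has \emph{Lipschitz} drift and diffusion, so that classical pathwise uniqueness applies and can be pulled back through $\phi^{-1}$. This overstates what the Zvonkin transformation delivers. The drift $\tilde b$ is indeed Lipschitz, because $\nabla u$ and $\nabla\phi_t^{-1}$ are bounded. But the diffusion coefficient is
$\tilde\sigma(t,y)=Id+\nabla u\big(t,\phi_t^{-1}(y)\big)$,
and since $u\in H^q_{2,p}(T)$ one only has $\nabla^2 u\in L^q_t L^p_x$; by Sobolev embedding (using $p>d$ from \eqref{sect-1.2}) $\tilde\sigma(t,\cdot)$ is H\"older of some order $\theta<1$, but it is \emph{not} Lipschitz. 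The present paper makes this explicit just after Theorem~\ref{sect-2-thm-3}: ``for a.e.\ $t\in[0,T]$, $\tilde\sigma(t,\cdot)$ is H\"older continuous of order $\theta<1$. Therefore we cannot directly apply the classical results.'' In Krylov--R\"ockner/Fedrizzi--Flandoli, pathwise uniqueness for the transformed SDE is obtained from the $W^{1,p}$ spatial regularity of $\tilde\sigma$ combined with Krylov-type estimates along the transformed paths (an argument in the spirit of Fang--Zhang or Zhang \cite{Zhang05b}), not from Lipschitz theory. Relatedly, in \cite{KR05,FF13b} weak uniqueness is established directly via Girsanov (the law of any solution in the class \eqref{sect-2-thm-1.1} is pinned down once the exponential martingale is controlled), while the Zvonkin transformation is invoked for \emph{pathwise} uniqueness; the two should not be merged. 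With the Lipschitz claim replaced by the correct $W^{1,p}$-plus-Krylov argument, and the two uniqueness notions separated, your outline would match the cited proof.
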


The next result (see \cite[Theorem 3.3]{FF13b}) concerning the regularity of solutions to the
backward parabolic system \eqref{sect-2-thm-2.1} plays an important role.

\begin{theorem}\label{sect-2-thm-2}
Let $\lambda>0$ and  $p\geq 2,q>2$ such that \eqref{sect-1.2} holds. Take two vector fields
$b,f\in L_p^q(T)$. Then in $H_{2,p}^q(T)$ there exists a unique solution
of the backward parabolic system
  \begin{equation}\label{sect-2-thm-2.1}
  \partial_t u+\frac12 \Delta u+b\cdot\nabla u-\lambda u +f=0,\quad u(T,x)=0.
  \end{equation}
Moreover, there exists a finite constant $N$ depending only on $d,p,q,T,\lambda$ and
$\|b\|_{L_p^q(T)}$ such that
  \begin{equation}\label{sect-2-thm-2.2}
  \|u\|_{H_{2,p}^q(T)}:=\|\partial_t u\|_{L_p^q(T)}+\|u\|_{\H_{2,p}^q(T)}\leq N\|f\|_{L_p^q(T)}.
  \end{equation}
\end{theorem}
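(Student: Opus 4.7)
The plan is to reverse time to convert the backward Cauchy problem to a forward one and solve it by a Banach fixed-point argument in $H_{2,p}^q(T)$, treating the first-order perturbation $b\cdot\nabla u$ as subordinate to the heat operator. Setting $v(s,x):=u(T-s,x)$, $\tilde b(s,x):=b(T-s,x)$ and $\tilde f(s,x):=f(T-s,x)$, the equation becomes
\begin{equation*}
\partial_s v - \tfrac12 \Delta v + \lambda v = \tilde b\cdot\nabla v + \tilde f,\qquad v(0,\cdot)=0.
\end{equation*}
Uniqueness will follow automatically once the a priori bound \eqref{sect-2-thm-2.2} is established, by applying it to the difference of two solutions with $f=0$, so existence together with the estimate is what must actually be proved.

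The first ingredient is Krylov's $L^q(L^p)$-maximal regularity for the shifted heat equation: for any $g\in L_p^q(T)$ the Cauchy problem $\partial_s w - \tfrac12\Delta w + \lambda w = g$, $w(0,\cdot)=0$ admits a unique $w\in H_{2,p}^q(T)$ with
\begin{equation*}
\|\partial_s w\|_{L_p^q(T)} + \|D^2 w\|_{L_p^q(T)} + \lambda\|w\|_{L_p^q(T)} + \sqrt\lambda\,\|\nabla w\|_{L_p^q(T)} \le C_0\|g\|_{L_p^q(T)},
\end{equation*}
with $C_0$ independent of $\lambda\ge 1$; this defines a bounded solution operator $\mathcal S_\lambda:L_p^q(T)\to H_{2,p}^q(T)$. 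The second ingredient is the anisotropic parabolic Sobolev embedding: under the strict subcritical condition $d/p+2/q<1$, one has $H_{2,p}^q(T)\hookrightarrow L^\infty(0,T;C_b^{1,\gamma}(\R^d))$ for some $\gamma\in(0,1)$, so that in particular $\|\nabla w\|_{L^\infty([0,T]\times\R^d)}\le C_1\|w\|_{H_{2,p}^q(T)}$. Combining this with H\"older in the spatial variable gives $\|\tilde b\cdot\nabla w\|_{L_p^q(T)}\le\|\tilde b\|_{L_p^q(T)}\|\nabla w\|_{L^\infty}$, and interpolating against the $\lambda$-weighted part of the maximal regularity estimate yields an extra gain $\lambda^{-\eta}$ for some $\eta>0$.

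Defining $\Phi:H_{2,p}^q(T)\to H_{2,p}^q(T)$ by $\Phi(w):=\mathcal S_\lambda(\tilde b\cdot\nabla w + \tilde f)$, whose fixed points are exactly the forward-equation solutions, the two ingredients combine to give $\|\Phi(w_1)-\Phi(w_2)\|_{H_{2,p}^q(T)}\le C_0 C_1\lambda^{-\eta}\|\tilde b\|_{L_p^q(T)}\|w_1-w_2\|_{H_{2,p}^q(T)}$. For $\lambda$ sufficiently large (depending only on $d,p,q,T$ and $\|b\|_{L_p^q(T)}$), $\Phi$ is a strict contraction and Banach's theorem delivers the unique fixed point $v$; feeding it back into the maximal regularity estimate and absorbing the drift contribution on the left yields \eqref{sect-2-thm-2.2}. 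For arbitrary $\lambda>0$, the exponential rescaling $u=e^{(\lambda'-\lambda)(T-t)}\tilde u$ with $\lambda'$ large reduces the general case to the large-$\lambda$ regime, at the cost of an additional factor $e^{(\lambda'-\lambda)T}$ in the constant $N$. The hard part will be quantifying the anisotropic embedding sharply enough to make the drift term strictly subordinate to the heat part; this is precisely where the strict inequality $d/p+2/q<1$ (rather than the critical equality) becomes indispensable, as it provides exactly the slack needed to control $\|\nabla v\|_{L^\infty}$ by $\|v\|_{H_{2,p}^q(T)}$ with a constant that vanishes as $\lambda\to\infty$ and thereby closes the contraction.
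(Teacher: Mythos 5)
The paper does not prove this theorem itself; it cites it verbatim from Fedrizzi--Flandoli \cite[Theorem 3.3]{FF13b}, which in turn traces back to Krylov--R\"ockner \cite[Theorem 10.3]{KR05}. Your proposal reconstructs exactly the proof strategy used in those references: time-reversal to a forward Cauchy problem, $\lambda$-weighted $L^q(L^p)$ parabolic maximal regularity for the heat operator, treating $b\cdot\nabla u$ perturbatively via a gradient sup-norm bound that decays in $\lambda$, closing by Banach contraction for large $\lambda$, and finally an exponential rescaling $u=e^{(\lambda'-\lambda)(T-t)}\tilde u$ to reduce arbitrary $\lambda>0$ to the large-$\lambda$ regime. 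The uniqueness-via-a-priori-bound step and the rescaling are both correct.

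One point where your wording is misleading, and which happens to be the crux of the whole argument: the constant in the embedding $\|\nabla v\|_{L^\infty}\le C\|v\|_{H_{2,p}^q(T)}$ does \emph{not} depend on $\lambda$ and hence does not vanish as $\lambda\to\infty$; if you only use that embedding together with the $\lambda$-uniform maximal-regularity bound $\|v\|_{H_{2,p}^q}\le C_0\|g\|_{L_p^q}$, the contraction constant $C_0C_1\|b\|_{L_p^q}$ is $\lambda$-independent and the fixed-point argument need not close. What produces the decisive $\lambda^{-\eta}$ factor is the conjunction of the $\lambda$-weighted term $\lambda\|w\|_{L_p^q}\le C_0\|g\|_{L_p^q}$ with the fact that, because $d/p+2/q<1$ strictly, $\|\nabla w\|_{L^\infty}$ is controlled through a space of strictly fewer than two spatial derivatives (say $\H_{\alpha,p}^q$ with some $\alpha<2$), which then interpolates between $\|D^2w\|_{L_p^q}$ and $\|w\|_{L_p^q}$ to give a power $\lambda^{-(2-\alpha)/2}$. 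You do gesture at this (``interpolating against the $\lambda$-weighted part''), and your final sentence correctly identifies this as the indispensable use of the strict inequality, but the claim that ``the constant in the embedding vanishes as $\lambda\to\infty$'' should be replaced by a quantitative statement such as \cite[Lemma 10.2]{KR05}. With that amendment the outline is sound and matches the cited proof.
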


We also have

\begin{lemma}\label{sect-2-lem-1}
Let $u_\lambda$ be the solution of \eqref{sect-2-thm-2.1}. Then
  $$\sup_{t\leq T}\|\nabla u_\lambda\|_\infty \to 0\quad \mbox{as }\lambda\to\infty,$$
where $\|\cdot\|_\infty$ is the supremum norm in the space $C(\R^d)$ of continuous functions.
\end{lemma}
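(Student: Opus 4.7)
The plan is to split $u_\lambda = \tilde u_\lambda + r_\lambda$, where $\tilde u_\lambda$ solves the drift-free analogue of \eqref{sect-2-thm-2.1} and $r_\lambda$ carries the drift, and to show separately that $\sup_t\|\nabla\tilde u_\lambda\|_\infty\to 0$ by an explicit heat-kernel computation, and that $\sup_t\|\nabla r_\lambda\|_\infty\to 0$ via Theorem \ref{sect-2-thm-2} applied to an equation with a vanishing source.

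First, I would let $\tilde u_\lambda$ be the solution of the drift-free backward equation
$$\partial_t \tilde u + \tfrac12\Delta \tilde u - \lambda \tilde u + f = 0,\quad \tilde u(T,\cdot)=0.$$
By reversing time and using Duhamel's formula, $\tilde u_\lambda(t,x)=\int_t^T e^{-\lambda(s-t)}(P_{s-t}f(s,\cdot))(x)\,\d s$, where $P_r$ is the Gaussian semigroup. Differentiating in $x$ under the integral, using the pointwise heat-kernel bound $\|\nabla P_r g\|_\infty\leq C r^{-1/2-d/(2p)}\|g\|_{L^p(\R^d)}$ and then Hölder's inequality in time with the conjugate exponent $q'=q/(q-1)$, one obtains
$$\sup_{t\leq T}\|\nabla\tilde u_\lambda\|_\infty \leq C\bigg(\int_0^T e^{-\lambda q'r}\, r^{-q'(1/2+d/(2p))}\,\d r\bigg)^{1/q'}\|f\|_{L_p^q(T)}.$$
Condition \eqref{sect-1.2} is exactly what makes $q'(1/2+d/(2p))<1$, so the integral is finite; the rescaling $v=\lambda r$ shows it is of order $\lambda^{-\gamma}$ for some $\gamma>0$, so $\sup_t\|\nabla\tilde u_\lambda\|_\infty\to 0$.

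Second, the remainder $r_\lambda:=u_\lambda-\tilde u_\lambda$ satisfies
$$\partial_t r_\lambda + \tfrac12\Delta r_\lambda + b\cdot\nabla r_\lambda - \lambda r_\lambda + b\cdot\nabla\tilde u_\lambda = 0,\quad r_\lambda(T,\cdot)=0,$$
and its source $b\cdot\nabla\tilde u_\lambda$ has $L_p^q(T)$-norm at most $\|b\|_{L_p^q(T)}\sup_t\|\nabla\tilde u_\lambda\|_\infty$, which vanishes by the first step. Applying Theorem \ref{sect-2-thm-2} to $r_\lambda$, with constants chosen uniform for $\lambda\geq 1$ (the additional damping from larger $\lambda$ only helps), gives $\|r_\lambda\|_{H_{2,p}^q(T)}\to 0$. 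A parabolic Sobolev embedding $H_{2,p}^q(T)\hookrightarrow C([0,T];C^1_b(\R^d))$, which is valid precisely because \eqref{sect-1.2} allows the gain of one full spatial derivative after taking the time trace, then yields $\sup_t\|\nabla r_\lambda\|_\infty\to 0$. Combining the two bounds finishes the proof.

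The step I expect to be most delicate is the uniform-in-$\lambda$ boundedness of the constant $N$ in \eqref{sect-2-thm-2.2}: ensuring this requires looking inside the proof of Theorem \ref{sect-2-thm-2} rather than quoting its statement as given. The parabolic Sobolev embedding at the borderline regularity allowed by \eqref{sect-1.2} is also something one needs to pin down carefully, though it is standard.
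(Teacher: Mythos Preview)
The paper does not supply its own proof of this lemma; it is stated without proof in the preliminaries section, drawn from the references \cite{FF13a, FF13b} (it is essentially Lemma~3.4 in \cite{FF13b}). So there is no in-paper argument to compare against.

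Your proposal is a correct route to the result. Both points you flag as delicate are genuine but resolvable. The embedding $H_{2,p}^q(T)\hookrightarrow C([0,T];C^1_b(\R^d))$ follows from the parabolic trace theorem $H_{2,p}^q(T)\hookrightarrow C([0,T];W^{2-2/q,p}(\R^d))$ together with the Sobolev embedding $W^{2-2/q,p}\hookrightarrow C^1_b$, valid exactly when $2-2/q-d/p>1$, i.e.\ \eqref{sect-1.2}. The uniform-in-$\lambda$ bound on $N$ in \eqref{sect-2-thm-2.2} also holds (increasing $\lambda$ only adds damping in the underlying fixed-point argument), but you can sidestep this issue entirely: apply your heat-kernel estimate from the first step directly to the Duhamel representation of $u_\lambda$ itself,
\[
u_\lambda(t)=\int_t^T e^{-\lambda(s-t)}P_{s-t}\big[f(s)+b(s)\cdot\nabla u_\lambda(s)\big]\,\d s,
\]
to obtain
\[
\sup_{t\leq T}\|\nabla u_\lambda\|_\infty \leq C\lambda^{-\gamma}\Big(\|f\|_{L_p^q(T)}+\|b\|_{L_p^q(T)}\sup_{t\leq T}\|\nabla u_\lambda\|_\infty\Big).
\]
Since $\sup_t\|\nabla u_\lambda\|_\infty<\infty$ by the embedding above, the second term can be absorbed once $\lambda$ is large, giving the result without splitting and without invoking Theorem~\ref{sect-2-thm-2} a second time. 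This more direct variant is closer to the argument in the cited literature.
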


In view of the above lemma, we fix $\lambda>0$ such that
  \begin{equation}\label{sect-2.1}
  \sup_{t\leq T}\|\nabla u_\lambda\|_\infty\leq \frac12.
  \end{equation}
Define
  $$\phi_\lambda(t,x)=x+u_\lambda(t,x),\quad (t,x)\in [0,T]\times\R^d.$$
The properties of the map $\phi_\lambda$ are collected in the next proposition (cf.
\cite[Lemma 3.5]{FF13b}).

\begin{proposition}\label{sect-2-prop-1}
The following statements hold:
\begin{itemize}
\item[\rm(i)] uniformly in $t\in[0,T]$, $\phi_\lambda(t,\cdot)$ has bounded first derivatives
which are H\"older continuous;
\item[\rm(ii)] for every $t\in[0,T]$, $\phi_\lambda(t,\cdot)$ is a $C^1$-diffeomorphism on $\R^d$;
\item[\rm(iii)] $\phi_\lambda^{-1}(t,\cdot):=(\phi_\lambda(t,\cdot))^{-1}$ has bounded first
spatial derivatives, uniformly in $t$;
\item[\rm(iv)] $\phi_\lambda$ and $\phi_\lambda^{-1}$ are jointly continuous in $(t,x)$.
\end{itemize}
\end{proposition}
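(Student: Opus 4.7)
The plan is to deduce all four properties from the parabolic Sobolev regularity of $u_\lambda$ given by Theorem~\ref{sect-2-thm-2} together with the smallness bound \eqref{sect-2.1}. The central preliminary step is a parabolic Sobolev embedding: since $u_\lambda\in H_{2,p}^q(T)$ with $\frac dp+\frac 2q<1$, and in particular $p>d$ (as $d/p<1-2/q<1$), the spatial embedding $W^{2,p}(\R^d)\hookrightarrow C_b^{1,\beta}(\R^d)$ with $\beta=1-d/p$ together with the time regularity $\partial_t u_\lambda\in L_p^q(T)$ coming from $u_\lambda\in\H_p^{1,q}(T)$ yields that $u_\lambda$ and $\nabla u_\lambda$ are bounded and jointly H\"older continuous on $[0,T]\times\R^d$. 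This already settles item~(i), since $\nabla\phi_\lambda(t,\cdot)=I+\nabla u_\lambda(t,\cdot)$ inherits the same regularity uniformly in $t$, and it also supplies the joint continuity of $\phi_\lambda$ needed in~(iv).

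For item~(ii), fix $t\in[0,T]$ and $y\in\R^d$. By \eqref{sect-2.1} the map $x\mapsto y-u_\lambda(t,x)$ is a contraction on $\R^d$ with Lipschitz constant at most $1/2$, so Banach's fixed point theorem supplies a unique $x$ with $\phi_\lambda(t,x)=y$; hence $\phi_\lambda(t,\cdot)$ is bijective. The same estimate yields the two-sided bound
$$\tfrac12|x_1-x_2|\leq|\phi_\lambda(t,x_1)-\phi_\lambda(t,x_2)|\leq\tfrac32|x_1-x_2|,$$
so $\phi_\lambda^{-1}(t,\cdot)$ is $2$-Lipschitz uniformly in $t$. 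Moreover, since $\|\nabla u_\lambda\|_\infty\leq 1/2$, the Neumann series $(\nabla\phi_\lambda)^{-1}=\sum_{k\geq 0}(-\nabla u_\lambda)^k$ converges with $\|(\nabla\phi_\lambda)^{-1}\|\leq 2$, and the inverse function theorem promotes $\phi_\lambda(t,\cdot)$ to a $C^1$-diffeomorphism with $\nabla\phi_\lambda^{-1}(t,y)=(\nabla\phi_\lambda)^{-1}(t,\phi_\lambda^{-1}(t,y))$. The latter identity gives a uniform-in-$t$ bound of $2$ on $\nabla\phi_\lambda^{-1}$, which is exactly~(iii).

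It remains to establish joint continuity of $\phi_\lambda^{-1}$ in~(iv). Given $(t_n,y_n)\to(t,y)$, the uniform Lipschitz bound from~(ii) gives $|\phi_\lambda^{-1}(t_n,y_n)-\phi_\lambda^{-1}(t_n,y)|\leq 2|y_n-y|\to 0$, so it suffices to show $x_n':=\phi_\lambda^{-1}(t_n,y)\to x:=\phi_\lambda^{-1}(t,y)$. Uniform boundedness of $u_\lambda$ and the identity $x_n'=y-u_\lambda(t_n,x_n')$ keep $(x_n')$ in a bounded set; from $\phi_\lambda(t_n,x_n')=y=\phi_\lambda(t,x)$ and the spatial Lipschitz bound on $u_\lambda(t,\cdot)$ one obtains
$$|x_n'-x|\leq|u_\lambda(t_n,x_n')-u_\lambda(t,x_n')|+\tfrac12|x_n'-x|,$$
hence $|x_n'-x|\leq 2|u_\lambda(t_n,x_n')-u_\lambda(t,x_n')|$, which tends to $0$ by the joint continuity of $u_\lambda$ on the relevant compact subset.

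The main obstacle I anticipate is the precise form of the parabolic Sobolev embedding used in Step~1: obtaining uniform boundedness and joint continuity of $\nabla u_\lambda$ on $[0,T]\times\R^d$ requires combining the spatial Morrey-type embedding (available because the strict inequality $d/p<1$ forces $p>d$) with the $L^q$-in-time regularity of $\partial_t u_\lambda$ to interpolate and produce H\"older continuity in the time variable. Once this regularity is in hand, the remaining steps are elementary applications of the contraction mapping principle, the Neumann series, and the inverse function theorem, all made possible by the smallness \eqref{sect-2.1}.
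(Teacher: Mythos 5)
The paper does not prove this proposition; it simply cites \cite[Lemma 3.5]{FF13b}, so there is no ``paper's own proof'' to compare against. Your argument is, however, correct and is essentially the standard one, and it almost certainly matches what Fedrizzi and Flandoli do.

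One remark on the only non-elementary step, the embedding. You argue by combining the spatial Morrey embedding $W^{2,p}\hookrightarrow C^{1,\beta}$ with ``the time regularity $\partial_t u_\lambda\in L_p^q$'' to get boundedness and joint continuity of $\nabla u_\lambda$, and you flag this as the obstacle. The clean way to make this precise, and the one that uses the hypothesis $\frac dp+\frac 2q<1$ in exactly the right place, is the mixed-norm trace embedding
\[
H_{2,p}^q(T)=\big\{u: u\in L^q(0,T;W^{2,p}),\ \partial_t u\in L^q(0,T;L^p)\big\}\hookrightarrow C\big([0,T];W^{2-2/q,p}(\R^d)\big),
\]
followed by the fractional Morrey embedding $W^{2-2/q,p}(\R^d)\hookrightarrow C^{1,\gamma}_b(\R^d)$, valid because $2-\frac 2q-\frac dp>1$, with $\gamma=1-\frac 2q-\frac dp>0$. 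This yields at once that $\nabla u_\lambda$ is bounded on $[0,T]\times\R^d$, H\"older continuous in $x$ uniformly in $t$, and jointly continuous in $(t,x)$; that is all items (i) and (iv) actually require. (Joint H\"older continuity in $(t,x)$, which you claim, is also available but is not needed here.) With this replacement, the rest of your argument — Banach fixed point for bijectivity, the two-sided Lipschitz bound, the Neumann series $(\nabla\phi_\lambda)^{-1}=\sum_{k\ge0}(-\nabla u_\lambda)^k$ with norm at most $2$, the inverse function theorem for (ii)–(iii), and the compactness/uniform continuity argument for the time continuity of $\phi_\lambda^{-1}$ in (iv) — is complete and correct.
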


We are now ready to state the Zvonkin-type transformation used in \cite{FGP10a, FF13a, FF13b}
to prove the existence of unique strong solution to It\^o's SDE with irregular drift coefficient.
Replacing $f$ by $b$ in the parabolic equation \eqref{sect-2-thm-2.1}, that is, we consider
  \begin{equation}\label{PDE}
  \partial_t u_\lambda+\frac12 \Delta u_\lambda+b\cdot\nabla u_\lambda
  =\lambda u_\lambda-b,\quad u_\lambda(T,x)=0.
  \end{equation}
In the following we shall fix a $\lambda>0$ such that \eqref{sect-2.1} holds, and we omit the
subscript $\lambda$ to simplify notations. Denote by $\phi_t(x)=x+u(t,x),\,(t,x)\in[0,T]\times\R^d$.
By It\^o's formula,
  \begin{align*}
  d u(t,X_t)&=\frac{\partial u}{\partial t}(t,X_t)\,d t+\nabla u(t,X_t)
  (b(t,X_t)\,d t+d W_t)+\frac12\Delta u(t,X_t)\,d t\cr
  &=\lambda u(t,X_t)\,d t-b(t,X_t)\,d t+\nabla u(t,X_t)\, d W_t.
  \end{align*}
Define the new process $Y_t=\phi_t\big(X_t(\phi_0^{-1})\big)=X_t(\phi_0^{-1})+
u\big(t,X_t(\phi_0^{-1})\big)$. Then (we write $X_t$ instead of $X_t(\phi_0^{-1})$ to save notations)
  \begin{align*}
  d Y_t&=b(t,X_t)\,d t+d W_t+\lambda u(t,X_t)\,d t-b(t,X_t)\,d t+\nabla u(t,X_t)\,d W_t\cr
  &=\lambda u(t,X_t)\,d t+(Id+\nabla u(t,X_t))\,d W_t,
  \end{align*}
where $Id$ is the identity matrix of order $d$. Since $\phi_t$ is a $C^1$-diffeomorphism,
we can define
  $$\tilde\sigma(t,y)=Id+\nabla u\big(t,\phi_t^{-1}(y)\big),\quad
  \tilde b(t,y)=\lambda u\big(t,\phi_t^{-1}(y)\big),\quad (t,y)\in [0,T]\times\R^d.$$
Therefore $Y_t$ satisfies the new It\^o's SDE
  \begin{equation}\label{SDE-new}
  d Y_t=\tilde\sigma(t,Y_t)\,d W_t+\tilde b(t,Y_t)\,d t,\quad Y_0=x.
  \end{equation}
We shall see in the next result that the coefficients of SDE \eqref{SDE-new} are more regular
than those of \eqref{SDE}, thus it is easier to be treated.

\begin{proposition}\label{sect-2-prop-2}
We have $\nabla\tilde b\in C([0,T],C_b(\R^d))$ and
  $$\tilde\sigma\in C([0,T],C_b(\R^d))\cap L^q(0,T;W^{1,p}(\R^d)).$$
\end{proposition}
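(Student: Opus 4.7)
The plan is to exploit two basic facts. First, the parabolic Sobolev embedding (standard under the condition $d/p+2/q<1$) gives
$H_{2,p}^q(T)\hookrightarrow C([0,T],C_b^{1,\alpha}(\R^d))$
for some $\alpha>0$; so $u$ lies in $C([0,T],C_b(\R^d))$ and $\nabla u$ lies in $C([0,T],C_b(\R^d))$ with $\nabla u(t,\cdot)$ $\alpha$-H\"older uniformly in $t$. Second, by \eqref{sect-2.1} and Proposition \ref{sect-2-prop-1}, both $\nabla u$ and $\nabla\phi_t^{-1}$ are bounded on $[0,T]\times\R^d$.

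Boundedness of $\tilde\sigma$ and $\nabla\tilde b$ is then immediate: $\tilde\sigma=Id+\nabla u\circ\phi_t^{-1}$ and $\nabla\tilde b=\lambda(\nabla u\circ\phi_t^{-1})\,\nabla\phi_t^{-1}$. For the $C([0,T],C_b(\R^d))$ assertions I must upgrade joint continuity (Prop.\ \ref{sect-2-prop-1}(iv)) to uniform-in-$y$ continuity in $t$. The key step is to observe that from $\phi_t^{-1}(y)=y-u(t,\phi_t^{-1}(y))$ and $\|\nabla u\|_\infty\le\tfrac12$ one gets
\[
\tfrac12\,|\phi_t^{-1}(y)-\phi_{t_0}^{-1}(y)|\le \|u(t,\cdot)-u(t_0,\cdot)\|_\infty,
\]
so $\phi_t^{-1}\to\phi_{t_0}^{-1}$ uniformly on $\R^d$ as $t\to t_0$. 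Combining this with the uniform $\alpha$-H\"older continuity of $\nabla u(t_0,\cdot)$ and with $\nabla u\in C([0,T],C_b(\R^d))$ via the splitting
\[
\nabla u(t,\phi_t^{-1}(y))-\nabla u(t_0,\phi_{t_0}^{-1}(y))
=\bigl[\nabla u(t,\cdot)-\nabla u(t_0,\cdot)\bigr]\circ\phi_t^{-1}(y)+\bigl[\nabla u(t_0,\phi_t^{-1}(y))-\nabla u(t_0,\phi_{t_0}^{-1}(y))\bigr],
\]
I get uniform convergence of $\tilde\sigma(t,\cdot)\to\tilde\sigma(t_0,\cdot)$. Since $\nabla\phi_t^{-1}=[Id+\nabla u(t,\phi_t^{-1}(\cdot))]^{-1}$ and matrix inversion is Lipschitz on matrices uniformly bounded away from singularity (again ensured by \eqref{sect-2.1}), $\nabla\phi_t^{-1}\to\nabla\phi_{t_0}^{-1}$ uniformly as well; multiplying then yields $\nabla\tilde b\in C([0,T],C_b(\R^d))$.

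It remains to check $\tilde\sigma\in L^q(0,T;W^{1,p}(\R^d))$. By the chain rule $\nabla_y\tilde\sigma(t,y)=D^2u(t,\phi_t^{-1}(y))\,\nabla\phi_t^{-1}(y)$, and the change of variable $x=\phi_t^{-1}(y)$ together with the uniform bound $|\det\nabla\phi_t(x)|\le(1+\|\nabla u\|_\infty)^d\le(3/2)^d$ gives
\[
\|\nabla_y\tilde\sigma(t,\cdot)\|_{L^p}\le \|\nabla\phi_t^{-1}\|_\infty^{\phantom p}\,(3/2)^{d/p}\,\|D^2u(t,\cdot)\|_{L^p}.
\]
Raising to the $q$-th power and integrating in $t$ reduces the claim to $\|D^2u\|_{L_p^q(T)}<\infty$, which is the definition of $\H_{2,p}^q(T)\supset H_{2,p}^q(T)$.

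I expect the embedding statement and the uniform-in-$y$ step to be the main technical points; everything else is routine chain rule and change of variables. The only subtlety beyond that is ensuring that continuity of $u$ as an element of $H_{2,p}^q(T)$ really delivers norm continuity in $C_b$, which the excerpt seems to take from \cite{FF13b} in the course of constructing $\phi_\lambda$.
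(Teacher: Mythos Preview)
The paper does not supply a proof of this proposition: Section~2 collects preliminary results from \cite{FF13a,FF13b}, and Proposition~\ref{sect-2-prop-2} is simply stated there without argument. So there is no ``paper's own proof'' to compare against; your write-up is supplying the details that the paper takes for granted.

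Your argument is correct and follows the natural route. The parabolic embedding $H_{2,p}^q(T)\hookrightarrow C([0,T],C_b^{1,\alpha})$ under $d/p+2/q<1$ is exactly what underlies Lemma~\ref{sect-2-lem-1} and Proposition~\ref{sect-2-prop-1}(i), so invoking it is in line with the paper. The key technical step---showing $\sup_y|\phi_t^{-1}(y)-\phi_{t_0}^{-1}(y)|\le 2\|u(t,\cdot)-u(t_0,\cdot)\|_\infty$ from the implicit relation $\phi_t^{-1}(y)=y-u(t,\phi_t^{-1}(y))$ and $\|\nabla u\|_\infty\le\tfrac12$---is clean and gives exactly the uniform-in-$y$ time continuity needed. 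The matrix-inversion argument for $\nabla\phi_t^{-1}$ and the chain-rule/change-of-variables computation for $\nabla_y\tilde\sigma$ are routine, as you say.

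One minor point worth flagging: the assertion $\tilde\sigma\in L^q(0,T;W^{1,p}(\R^d))$, read literally, cannot hold because $\tilde\sigma(t,\cdot)-\nabla u(t,\phi_t^{-1}(\cdot))=Id\notin L^p(\R^d)$. What is actually meant---and what the paper uses later (see the proofs of Lemma~\ref{sect-3-lem-2} and Proposition~\ref{sect-3-prop-3}, where only $\nabla\tilde\sigma$ and $\div\tilde\sigma$ in $L_p^q(T)$ are needed)---is that $\nabla_y\tilde\sigma\in L_p^q(T)$, i.e.\ $\tilde\sigma-Id\in L^q(0,T;W^{1,p}(\R^d))$. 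Your computation establishes precisely this; just be aware that you have proved the intended statement rather than the one literally written.
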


Based on the regularity of the coefficients $\tilde\sigma,\tilde b$ and applying
Krylov-type estimates, Fedrizzi and Flandoli first proved the pathwise uniqueness of
solutions to the equation \eqref{SDE-new}, and then translated this result to the
solution $X_t$ of the original equation \eqref{SDE} via the transformation $\phi_t$.
Thus by Yamada--Watanabe's criterion, both equations \eqref{SDE-new} and \eqref{SDE}
have a unique strong solution. The following theorem is the main result in \cite{FF13b}
(see Theorem 1.2 there).

\begin{theorem}\label{sect-2-thm-3}
Under the assumption of Theorem \ref{sect-2-thm-1}, the equation \eqref{SDE} has a
unique strong solution $X_t$ which defines a stochastic flow of homeomorphisms and
is $\alpha$-H\"older continuous for every $\alpha<1$.
\end{theorem}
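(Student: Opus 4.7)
The plan is to execute the Itô--Tanaka (Zvonkin) transformation strategy laid out in the preceding section. Weak existence is already supplied by Theorem \ref{sect-2-thm-1}, so what remains is pathwise uniqueness, the homeomorphism property of the flow, and the $\alpha$-Hölder regularity.

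First I would establish pathwise uniqueness for the transformed equation \eqref{SDE-new} and then transfer it to \eqref{SDE}. By Proposition \ref{sect-2-prop-2}, $\tilde b$ has bounded spatial gradient and $\tilde\sigma \in C([0,T],C_b(\R^d))\cap L^q(0,T;W^{1,p}(\R^d))$. If $Y^1,Y^2$ are two solutions of \eqref{SDE-new} with the same initial datum on the same probability space, applying Itô's formula to $|Y_t^1-Y_t^2|^2$ produces a drift term controlled by $\int_0^t|Y_s^1-Y_s^2|^2\,ds$ (thanks to the boundedness of $\nabla\tilde b$) and a quadratic-variation term involving the Sobolev derivative of $\tilde\sigma$ along the solution paths. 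The latter is handled by Gronwall with an exponential weight $\exp(-C\int_0^t g(s)\,ds)$, and the required integrability of $g=|\nabla\tilde\sigma|^2(s,Y_s^1)+|\nabla\tilde\sigma|^2(s,Y_s^2)$ is precisely what the Krylov estimate \eqref{sect-2-thm-1.2} delivers. Since $\phi_t$ is a $C^1$-diffeomorphism, $X_t^1=X_t^2$ is equivalent to $Y_t^1=Y_t^2$, so pathwise uniqueness transfers to \eqref{SDE}; combining with weak existence, Yamada--Watanabe yields a unique strong solution.

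Second, for the flow of homeomorphisms I would again work on the transformed equation and establish, for every $p\geq 2$, two-sided moment bounds of the form
\begin{equation*}
\E|Y_t(x)-Y_t(y)|^{2p}\leq C_p|x-y|^{2p},\qquad \E|Y_t(x)-Y_t(y)|^{-2p}\leq C_p|x-y|^{-2p}.
\end{equation*}
The upper bound follows from Itô's formula, Burkholder--Davis--Gundy and the Krylov bound \eqref{sect-2-thm-1.2}, using the $W^{1,p}$-regularity of $\tilde\sigma$. The lower bound, needed for injectivity, requires applying Itô's formula to $|Y_t(x)-Y_t(y)|^{-2p}$; the singular drift terms produced are absorbed via an exponential supermartingale whose integrability is once more guaranteed by \eqref{sect-2-thm-1.2} with $f=|\nabla\tilde\sigma|$, whose $L_p^q(T)$-norm is finite by Proposition \ref{sect-2-prop-2}. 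Together with a standard Kolmogorov argument producing a jointly continuous modification of $(t,x)\mapsto Y_t(x)$, and a forward/backward SDE argument for surjectivity, this gives the homeomorphism property for $Y_t$. Since $X_t(x)=\phi_t^{-1}\bigl(Y_t(\phi_0(x))\bigr)$ and $\phi_t^{-1}$ is a $C^1$-diffeomorphism uniformly in $t$ by Proposition \ref{sect-2-prop-1}, the homeomorphism property descends to $X_t$.

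Finally, Hölder regularity comes from the same moment estimates: sending $p\to\infty$ in the upper bound and invoking Kolmogorov's criterion produces a version of $x\mapsto Y_t(x)$ that is locally $\alpha$-Hölder for every $\alpha<1$, and composition with the Lipschitz map $\phi_t^{-1}$ preserves this exponent for $X_t$. The main obstacle in the whole program is the inverse moment estimate: a careless application of Itô's formula to $|Y^1-Y^2|^{-2p}$ generates drift terms involving $|\nabla\tilde\sigma|^2$ that are merely in $L^{q/2}_{p/2}(T)$ along trajectories, and the only way to dispose of them is to exponentiate and call upon the Khasminskii-type bound \eqref{sect-2-thm-1.2}; verifying the hypothesis $\frac{d}{\tilde p}+\frac{2}{\tilde q}<1$ for the exponents arising from $|\nabla\tilde\sigma|$ is the delicate bookkeeping step on which the whole argument rests.
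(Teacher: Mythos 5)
The statement you were asked to prove is quoted verbatim from Fedrizzi and Flandoli \cite{FF13b} (Theorem~1.2 there); the paper you were given does not prove it, it only cites it and offers a two-sentence synopsis of the strategy (transform via $\phi_t$, prove pathwise uniqueness for \eqref{SDE-new} using Krylov-type estimates, invoke Yamada--Watanabe, transfer back through $\phi_t^{-1}$). Your plan follows this synopsis faithfully, so at the level of strategy you are aligned with the source.

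There is, however, one genuine gap that would halt the argument as written. Since $\tilde\sigma$ lies only in $L^q(0,T;W^{1,p}(\R^d))$ and is not Lipschitz, the pointwise control you implicitly use,
\begin{equation*}
|\tilde\sigma(s,Y^1_s)-\tilde\sigma(s,Y^2_s)|\leq C\,|Y^1_s-Y^2_s|\,\bigl(|\nabla\tilde\sigma|(s,Y^1_s)+|\nabla\tilde\sigma|(s,Y^2_s)\bigr),
\end{equation*}
is not available: the fundamental theorem of calculus gives an integral of $\nabla\tilde\sigma$ over the whole segment between $Y^1_s$ and $Y^2_s$, not its values at the endpoints, and the Khasminskii bound \eqref{sect-2-thm-1.2} is only about integrals along solution trajectories, not along such segments. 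The remedy actually used in \cite{FF13b} (and in the related works of Zhang) is the Hardy--Littlewood maximal-function form of Lusin's inequality: for $f\in W^{1,p}_{\mathrm{loc}}$ with $p>1$, for a.e.\ $x,y$,
\begin{equation*}
|f(x)-f(y)|\leq C_d\,|x-y|\,\bigl(M|\nabla f|(x)+M|\nabla f|(y)\bigr),
\end{equation*}
where $M$ is the maximal operator. Because $M$ is bounded on $L^p(\R^d)$ for $p>1$, $M|\nabla\tilde\sigma|$ is still in $L^q_p(T)$ with $\frac dp+\frac 2q<1$, so \eqref{sect-2-thm-1.2} may be applied with $f=M|\nabla\tilde\sigma|$. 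Your Gronwall weight, your Novikov/Khasminskii verification, and your inverse-moment estimate should all be run with $g=(M|\nabla\tilde\sigma|)^2$ rather than $|\nabla\tilde\sigma|^2$. Relatedly, the ``delicate bookkeeping'' you flag at the end is not really the issue---$\nabla\tilde\sigma$ already lives in the same $L^q_p$ class as $b$, so the exponent check is immediate; the subtle step is precisely the replacement of the gradient by its maximal function. With that correction your outline does reproduce the Fedrizzi--Flandoli argument.
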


From Proposition \ref{sect-2-prop-2} and Sobolev's embedding theorem, we see that, for a.e. $t\in[0,T]$,
$\tilde\sigma(t,\cdot)$ is H\"older continuous of order $\theta<1$. Therefore we cannot
directly apply the classical results (see for instance \cite[Lemma 4.3.1]{Kunita90}) to conclude
that the solution $Y_t$ to \eqref{SDE-new} leaves the reference measure quasi-invariant.
To show our main theorem, we need to do some approximation arguments. Here are the necessary
preparations (see \cite[Lemma 12]{FF13a}).

\begin{lemma}\label{sect-2-lem-2}
Let $b^n$ be a sequence of smooth vector fields converging to $b$ in $L_p^q(T)$, and $u^n$
the solution to \eqref{PDE} with $b$ replaced by $b^n$. Then we have
\begin{itemize}
\item[\rm(i)] $u^n(t,x)$ and $\nabla u^n(t,x)$ converge pointwise in $(t,x)$ to $u(t,x)$ and
$\nabla u(t,x)$ respectively, and the convergence is uniform on compact sets;
\item[\rm(ii)] $\lim_{n\to\infty}\|u^n-u\|_{H_{2,p}^q(T)}=0$;
\item[\rm(iii)] there exists a $\lambda$ for which $\sup_{n\geq 1}\sup_{t,x}
|\nabla u^n(t,x)|\leq \frac12$;
\item[\rm(iv)] $\sup_{n\geq1}\|\nabla^2 u^n\|_{L_p^q(T)}\leq C$.
\end{itemize}
\end{lemma}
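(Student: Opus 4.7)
The plan is to deduce all four assertions from the a priori estimate of Theorem~\ref{sect-2-thm-2} applied to the difference $w^n := u^n - u$. Since $u^n$ solves \eqref{PDE} with $b$ replaced by $b^n$, subtracting the two equations and rewriting $b^n\cdot\nabla u^n - b\cdot\nabla u = b^n\cdot\nabla w^n + (b^n-b)\cdot\nabla u$ gives
\begin{equation*}
\partial_t w^n + \tfrac12 \Delta w^n + b^n\cdot\nabla w^n - \lambda w^n + f^n = 0,\qquad w^n(T,\cdot)=0,
\end{equation*}
with source $f^n := (b^n-b) + (b^n-b)\cdot\nabla u$. Theorem~\ref{sect-2-thm-2} with drift $b^n$ then yields
\begin{equation*}
\|w^n\|_{H_{2,p}^q(T)} \leq N_n\,\|f^n\|_{L_p^q(T)} \leq N_n\bigl(1+\|\nabla u\|_\infty\bigr)\|b^n-b\|_{L_p^q(T)},
\end{equation*}
and the constants $N_n$ depend only on $\|b^n\|_{L_p^q(T)}$, hence are uniformly bounded because $b^n\to b$ in $L_p^q(T)$.

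The key ingredient is that $\|\nabla u\|_\infty<\infty$. This is a standard consequence of the subcriticality condition $d/p+2/q<1$: having $u\in H_{2,p}^q(T)$, the parabolic Sobolev embedding (the same one underlying Lemma~\ref{sect-2-lem-1}) puts $\nabla u$ into $C([0,T],C_b^\alpha(\R^d))$ for some $\alpha>0$. Combined with $\|b^n-b\|_{L_p^q(T)}\to 0$, this gives $\|w^n\|_{H_{2,p}^q(T)}\to 0$, which is exactly (ii). Part (i) then follows from (ii) by applying the same embedding to $w^n$ and $\nabla w^n$, upgrading $H_{2,p}^q$-convergence to locally uniform convergence in $(t,x)$.

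For (iv), the a priori estimate applied directly to $u^n$ gives $\|u^n\|_{H_{2,p}^q(T)}\leq N\|b^n\|_{L_p^q(T)}$ with a constant $N$ depending only on $\sup_n\|b^n\|_{L_p^q(T)}$, so in particular $\sup_n\|\nabla^2 u^n\|_{L_p^q(T)}<\infty$. For (iii), I would reread the proof of Lemma~\ref{sect-2-lem-1}: it bounds $\sup_t\|\nabla u_\lambda\|_\infty$ in terms of $\|b\|_{L_p^q(T)}$ and a factor that tends to zero as $\lambda\to\infty$. Since $\|b^n\|_{L_p^q(T)}$ is uniformly bounded, the same argument applied to $u^n$ produces a bound that decays in $\lambda$ uniformly in $n$, so one can pick a single $\lambda$ large enough to enforce the $1/2$ threshold simultaneously for all $u^n$ and for $u$.

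The main obstacle is verifying that the constants from Theorem~\ref{sect-2-thm-2} and Lemma~\ref{sect-2-lem-1} depend on $\|b\|_{L_p^q(T)}$ in a way that is monotone or at least locally bounded, so that passing from $b$ to $b^n$ preserves all estimates uniformly. This requires tracing the dependences in the Krylov--R\"ockner PDE theory \cite{KR05} and its refinements in \cite{FF13b}, specifically through the fixed-point construction and the heat-kernel-type bounds for the resolvent $(\lambda-\tfrac12\Delta-b\cdot\nabla)^{-1}$. Once this dependence is pinned down, the perturbation argument above is entirely linear and routine.
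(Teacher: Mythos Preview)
The paper does not actually prove this lemma: it is imported verbatim from \cite[Lemma~12]{FF13a}, so there is no in-paper argument to compare against. Your perturbation scheme---writing the equation for $w^n=u^n-u$ with drift $b^n$ and forcing $f^n=(b^n-b)(1+\nabla u)$, then invoking the a~priori bound \eqref{sect-2-thm-2.2} together with the parabolic embedding $H_{2,p}^q(T)\hookrightarrow C([0,T];C_b^\alpha)$ for $\nabla u$---is exactly the standard route and is what lies behind the cited result. The derivation of (iv) from \eqref{sect-2-thm-2.2} and of (iii) from a $\lambda$-quantitative version of Lemma~\ref{sect-2-lem-1} is also the expected line.

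The one point you flag yourself is the only real thing to check: that the constant $N$ in \eqref{sect-2-thm-2.2} and the decay rate in Lemma~\ref{sect-2-lem-1} are locally bounded functions of $\|b\|_{L_p^q(T)}$, so that replacing $b$ by $b^n$ does not blow them up. This is indeed how the Krylov--R\"ockner estimates behave (the constants arise from a contraction argument once $\lambda$ dominates a power of $\|b\|_{L_p^q(T)}$), and \cite{FF13b} makes this explicit; but you are right that it has to be read off the proofs rather than the statements. With that dependence granted, your argument is complete.
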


Let $Y^n_t(x)$ be the flow associated to \eqref{SDE-new} with the coefficients
  \begin{equation}\label{sect-2.2}
  \tilde \sigma^n(t,y)=Id+\nabla u^n(t,\phi_t^{n,-1}(y)),\quad
  \tilde b^n(t,y)=\lambda u^n(t,\phi_t^{n,-1}(y)),
  \end{equation}
where $\phi^n_t(x)=x+u^n(t,x)$.
For $R>0$, $B_R$ denotes the ball in $\R^d$ centered at the origin with radius $R$.
We have

\begin{proposition}\label{sect-2-prop-3}
In the situation of Lemma \ref{sect-2-lem-2}, for every $R>0$ and $k\geq 2$, we have
  $$\lim_{n\to\infty}\sup_{t\leq T}\sup_{x\in B_R}\E\big(|Y^n_t(x)-Y_t(x)|^k\big)=0.$$
The same convergence holds for the inverse flows $Y^{n,-1}_t$ and $Y^{-1}_t$.
Moreover, there exists $C_k$ independent on $n\geq 1$ such that
  $$\E\bigg[\sup_{t\leq T}|Y^n_t(x)|^k\bigg]\leq C_k(1+|x|^k).$$
\end{proposition}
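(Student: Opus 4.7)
The plan is to prove the proposition in three stages: a uniform-in-$n$ moment bound for $Y^n_t$, the $L^k$-convergence of $Y^n_t$ to $Y_t$ on $[0,T]\times B_R$, and the analogous statements for the inverse flows.

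I would first observe that both coefficients $\tilde\sigma^n$ and $\tilde b^n$ are bounded uniformly in $n$. Indeed, $\|\tilde\sigma^n\|_\infty\leq 3/2$ follows immediately from Lemma \ref{sect-2-lem-2}(iii). For $\tilde b^n$, the Feynman--Kac representation
$$u^n(t,x) = \E\int_t^T e^{-\lambda(s-t)}\, b^n(s,\xi^{n,t,x}_s)\,d s,$$
applied to the smooth approximate PDE, combined with Krylov's estimate (which is uniform in $n$ because $\|b^n\|_{L_p^q(T)}$ is uniformly bounded), gives $\|u^n\|_\infty\leq C$ independent of $n$, hence $\|\tilde b^n\|_\infty\leq\lambda C$. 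A standard Burkholder--Davis--Gundy (BDG) plus Gronwall argument applied to the integral form $Y^n_t = x + \int_0^t \tilde b^n(s,Y^n_s)\,d s + \int_0^t \tilde\sigma^n(s,Y^n_s)\,d W_s$ then delivers the growth estimate $\E[\sup_{t\leq T}|Y^n_t(x)|^k]\leq C_k(1+|x|^k)$.

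For the convergence, set $Z^n_t := Y^n_t(x) - Y_t(x)$ and split each coefficient difference as
$$\tilde a^n(s,Y^n_s) - \tilde a(s,Y_s) = \bigl[\tilde a^n(s,Y^n_s) - \tilde a(s,Y^n_s)\bigr] + \bigl[\tilde a(s,Y^n_s) - \tilde a(s,Y_s)\bigr]$$
for $\tilde a\in\{\tilde b,\tilde\sigma\}$. The first bracket is controlled via Lemma \ref{sect-2-lem-2}(i) combined with the estimate $|\phi^{n,-1}_s(y) - \phi^{-1}_s(y)|\leq 2\|u^n(s,\cdot)-u(s,\cdot)\|_\infty$, which is derived from $\phi^n_s(x) = x + u^n(s,x)$ and $\|\nabla u\|_\infty\leq 1/2$; using the moment bound from the first step to dominate the tail of $|Y^n_s|$, this yields $\sup_{x\in B_R}\sup_{s\leq T}\E[|\tilde a^n - \tilde a|^k(s,Y^n_s)]\to 0$. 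The second bracket is treated via Proposition \ref{sect-2-prop-2}: $\tilde b$ is globally Lipschitz in $y$, while $\tilde\sigma$ is bounded and $\theta$-H\"older in $y$ for some $\theta\in(0,1)$ coming from Sobolev embedding of $W^{1,p}(\R^d)$. BDG applied to the stochastic integral part then yields, with $\psi_n(t):=\sup_{x\in B_R}\E|Z^n_t|^k$, an inequality of the form
$$\psi_n(t)\leq \varepsilon_n + C\int_0^t\bigl(\psi_n(s) + \psi_n(s)^\theta\bigr)\,d s,\qquad \varepsilon_n\to 0.$$

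The main obstacle is the nonlinear H\"older term $\psi_n^\theta$, which prevents a direct Gronwall conclusion (Bihari's inequality does not give decay to zero when $\theta<1$). I would circumvent it by combining the uniform a priori bound $\psi_n\leq M$ with a Gy\"ongy--Krylov-type argument: the family $\{(Y^n,Y^m)\}_{n,m}$ is tight in $C([0,T];\R^d\times\R^d)$ owing to the uniform boundedness of $\tilde\sigma^n,\tilde b^n$, and any weak limit point is necessarily supported on the diagonal because pathwise uniqueness for the limit SDE $d Y = \tilde\sigma\,d W + \tilde b\,d t$ is available (this is precisely the pathwise uniqueness underlying Theorem \ref{sect-2-thm-3}). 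It follows that $Y^n\to Y$ in probability uniformly on $[0,T]$, and the uniform integrability of $|Y^n_t-Y_t|^k$ provided by the moment bound in the first step upgrades this to the claimed $L^k$-convergence, uniformly in $x\in B_R$. The statements for the inverse flows are obtained by rerunning the same three-step scheme for the backward SDE characterising $Y^{n,-1}$ and $Y^{-1}$, whose coefficients inherit the same regularity class.
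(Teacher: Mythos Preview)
Your moment-bound argument (bounded coefficients plus BDG/Gronwall) is correct and matches the paper's one-line justification. The convergence assertions, however, are handled quite differently in the paper: it does not analyse the SDE for $Y^n$ directly but simply quotes \cite[Lemma~3]{FF13a}, where the estimate is obtained through the relation $Y^n_t=\phi^n_t\big(X^n_t(\phi^{n,-1}_0)\big)$ and a quantitative bound on $X^n-X$. For the inverse flows the paper uses the identity $Y^{n,-1}_t=\phi^n_0\big(X^{n,-1}_t(\phi^{n,-1}_t)\big)$ together with the convergence of $X^{n,-1}$ from the same reference, so no backward SDE is ever written down.

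Your direct route has a real gap at the Gy\"ongy--Krylov step. That argument is a soft compactness device which, for each \emph{fixed} $x$, yields $Y^n_\cdot(x)\to Y_\cdot(x)$ in probability in $C([0,T];\R^d)$; uniform integrability then upgrades this to $L^k$ for that same $x$. Nothing in the scheme produces the uniformity $\sup_{x\in B_R}$ that the statement requires. Closing this gap would need a uniform-in-$n$ $L^k$-modulus of continuity for $x\mapsto Y^n_t(x)$---precisely the kind of quantitative stability estimate that your H\"older obstruction prevented you from proving directly, so the workaround is circular. The argument underlying \cite{FF13a} avoids the H\"older difficulty altogether: instead of treating $\tilde\sigma$ as merely H\"older, one exploits $\nabla^2 u\in L_p^q(T)$ together with the exponential bound \eqref{sect-2-thm-1.2} to get a pointwise estimate of the type $|\tilde\sigma(s,y)-\tilde\sigma(s,y')|\leq C|y-y'|\,g(s,y,y')$ with $\int_0^T g(s,Y^n_s,Y_s)^2\,d s$ having all exponential moments; this restores a linear Gronwall structure and delivers a rate uniform in $x$. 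Finally, your inverse-flow sketch (``rerun the scheme for the backward SDE'') is too loose: the inverse of a stochastic flow does not solve a forward It\^o equation in the same filtration, and making the backward formulation rigorous (adaptedness, pathwise uniqueness) would be a nontrivial task that the paper's diffeomorphism identity bypasses entirely.
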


\begin{proof}
The first assertion was shown in the proof of \cite[Lemma 3]{FF13a} (see p.1336),
while the second one is a consequence of \cite[Lemma 3]{FF13a} and the
relation $Y^{n,-1}_t=\phi^{n}_0\big(X^{n,-1}_t(\phi^{n,-1}_t)\big)$, where $X^n_t$
is the flow associated to \eqref{SDE} with $b$ replaced by $b_n$, and $X^{n,-1}_t$
is its inverse flow. As for the last estimate, it is a slight improvement of \cite[(26)]{FF13a}
by removing $\sup_{t\leq T}$ into the expectation: this follows from the uniform growth
of the coefficients $\tilde\sigma^n,\tilde b^n$ and classical moment estimates.
\end{proof}

\section{Proof of the main result}

In this section we first prove that the Lebesgue measure is quasi-invariant under the
stochastic flow $Y_t$ generated by the new equation \eqref{SDE-new}, following the ideas
in \cite{Luo09, Zhang10}. Then we transfer this property to the solution $X_t$ of the original
SDE \eqref{SDE} by using the diffeomorphism $\phi_t:x\mapsto x+u(t,x)$, where $u$ solves
the parabolic equation \eqref{PDE}.

We start by recalling the setting. Let $b:[0,T]\times\R^d\to\R^d$ be a time-dependent vector
field verifying the assumption of Theorem \ref{sect-1-thm}, and $u(t,x)$ the solution
to the parabolic system \eqref{PDE}. The transformation $\phi_t$ and the coefficients
$\tilde\sigma,\tilde b$ are defined as in Section 2.
As mentioned in the last section, the diffusion coefficient $\tilde\sigma$ of \eqref{SDE-new}
is only H\"older continuous, which makes it impossible to directly apply the existing results to
conclude the quasi-invariance of $Y_t:\R^d\to\R^d$. Therefore we take a sequence $\{b^n\}_{n\geq 1}$
of smooth vector fields with compact supports in $[0,T]\times\R^d$ such that
  \begin{equation}\label{sect-3.1}
  \lim_{n\to\infty}\|b^n-b\|_{L_p^q(T)}=0.
  \end{equation}
Denote by $X^n_t$ the flow of diffeomorphisms generated by \eqref{SDE} with $b$ replaced by
$b^n$, and $X^{n,-1}_t$ its inverse flow. We include the following estimate which was
proved in \cite[Lemma 5]{FF13a}: for every $k\geq 1$,
  \begin{equation}\label{Jacobian}
  \sup_{n\geq 1}\sup_{t\leq T}\sup_{x\in\R^d}\E\big(|\nabla X^{n,-1}_t(x)|^k\big)<+\infty.
  \end{equation}
Let $u^n$ be the solution to \eqref{PDE} with $b$ replaced by $b^n$. Then $u^n$ is smooth
with bounded derivatives (cf. \cite[Theorem 2]{FGP10a} where $u^n$ has bounded derivatives
up to order $2$ when $b^n\in L^\infty(0,T; C_b^\alpha(\R^d,\R^d))$; note that the parabolic
equation (9) in \cite{FGP10a} is not accompanied with the boundary condition $u(T,\cdot)=0$).
By Lemma \ref{sect-2-lem-2}(iii), we shall fix $\lambda$ big enough such that
  \begin{equation}\label{sect-3.2}
  \sup_{t\leq T}\|\nabla u(t,\cdot)\|_\infty\bigvee \sup_{n\geq 1}\sup_{t\leq T}
  \|\nabla u^n(t,\cdot)\|_\infty\leq \frac12.
  \end{equation}
Let $\tilde \sigma^n(t,y)$ and $\tilde b^n(t,y)$ be defined as in \eqref{sect-2.2}.
We consider the It\^o SDE
  $$d Y^n_t=\tilde\sigma^n(t,Y^n_t)\,d W_t+\tilde b^n(t,Y^n_t)\,d t,\quad Y^n_0=x.$$
Since the coefficients $\tilde\sigma^n(t,y)$ and $\tilde b^n(t,y)$ are smooth with bounded
spatial derivatives, uniformly in $t\in[0,T]$, we know that $Y^n_t$ is a flow of
diffeomorphisms on $\R^d$. The inverse flow is denoted by $Y^{n,-1}_t$. Moreover,
$Y^n_t=\phi^n_t\big(X^n_t(\phi^{n,-1}_0)\big)$ with $\phi^n_t(x)=x+u^n(t,x)$.

In the sequel, we denote by $(Y^{n}_t)_\#\L^d=\L^d\circ Y^{n,-1}_t$ and
$(Y^{n,-1}_t)_\#\L^d=\L^d\circ Y^n_t$ the push-forwards of the Lebesgue measure
$\L^d$ by the flows $Y^n_t$ and $Y^{n,-1}_t$. Then it is well-known that
  $$\rho^n_t:=\frac{d(Y^{n}_t)_\#\L^d}{d\L^d}=\big|\det\big(\nabla Y^{n,-1}_t\big)\big|\quad
  \mbox{and}\quad \bar\rho^n_t:=\frac{d(Y^{n,-1}_t)_\#\L^d}{d\L^d}=\big|\det\big(\nabla Y^{n}_t\big)\big|.$$
The following simple relation holds:
  \begin{equation}\label{relation}
  \rho^n_t(x)=\big[\bar\rho^n_t\big(Y^{n,-1}_t(x)\big)\big]^{-1}.
  \end{equation}
Moreover, by \cite[Lemma 4.3.1]{Kunita90} (see also \cite[(2.2)]{Zhang10}),
the density function $\bar\rho^n_t$ has the following explicit expression:
  \begin{equation}\label{density}
  \bar\rho^n_t(x)=\exp\bigg\{\int_0^t\big\<\div(\tilde\sigma^n)(s,Y^n_s(x)),d W_s\big\>
  +\int_0^t\Big[\div(\tilde b^n)-\frac12\<\nabla\tilde\sigma^n,
  (\nabla\tilde\sigma^n)^\ast\>\Big](s,Y^n_s(x))\,d s\bigg\},
  \end{equation}
where $\div(\tilde\sigma^n)=\big(\div(\tilde\sigma^n_{\cdot,1}),\ldots,\div(\tilde\sigma^n_{\cdot,d})\big)$
is a vector-valued function whose components are the divergences of the column vectors
of $\tilde\sigma^n$, and $\<\nabla\tilde\sigma^n,(\nabla\tilde\sigma^n)^\ast\>
=\sum_{k=1}^d\sum_{i,j=1}^d(\partial_i\tilde\sigma^n_{jk})(\partial_j\tilde\sigma^n_{ik})$.
Next, noting that $x=\phi^n_t(\phi^{n,-1}_t(x))=\phi^{n,-1}_t(x)+u^n(t,\phi^{n,-1}_t(x))$, thus
  $$Id=\nabla\phi^{n,-1}_t(x)+\nabla u^n(t,\phi^{n,-1}_t(x))\nabla\phi^{n,-1}_t(x).$$
As a result, for any $x\in\R^d$, we have by \eqref{sect-3.2} that ($\|\cdot\|_{op}$ is the
operator norm)
  \begin{equation*}
  1=\|Id\|_{op}\geq \big\|\nabla\phi^{n,-1}_t(x)\big\|_{op}
  -\big\|\nabla u^n(t,\phi^{n,-1}_t(x))\big\|_{op}\big\|\nabla\phi^{n,-1}_t(x)\big\|_{op}
  \geq \frac12\big\|\nabla\phi^{n,-1}_t(x)\big\|_{op},
  \end{equation*}
that is,
  \begin{equation}\label{sect-3-lem-1.3}
  \sup_{t\leq T}\big\|\nabla\phi^{n,-1}_t(x)\big\|_{op}\leq 2.
  \end{equation}
Combining this estimate with \eqref{Jacobian} and
the relation $Y^{n,-1}_t=\phi^n_0\big(X^{n,-1}_t(\phi^{n,-1}_t)\big)$, we obtain
  \begin{equation}\label{Jacobian-Y}
  \sup_{n\geq 1}\sup_{t\leq T}\sup_{x\in\R^d}\E\big(\rho^n_t(x)^k\big)=
  \sup_{n\geq 1}\sup_{t\leq T}\sup_{x\in\R^d}\E\big(\big|\det\big(\nabla Y^{n,-1}_t(x)\big)\big|^k\big)<\infty,
  \quad \mbox{for all }k\geq 1.
  \end{equation}

Now we are ready to show that the Lebesgue measure is absolutely continuous under the
action of the flow $Y_t$ generated by \eqref{SDE-new}.

\begin{proposition}[Absolute continuity under the flow $Y_t$]\label{sect-3-prop-0}
Assume the condition of Theorem \ref{sect-1-thm}. Then for any $t\in[0,T]$, the push-forward
$(Y_t)_\#\L^d$ of the Lebesgue measure is absolutely continuous with respect to $\L^d$. Moreover,
the Radon--Nikodym density $\rho_t:=\frac{d(Y_t)_\#\L^d}{d\L^d}$ satisfies
  \begin{equation}\label{sect-3-prop-0.1}
  \sup_{t\leq T}\sup_{x\in\R^d}\E\big(\rho_t(x)^k\big)<\infty, \quad \mbox{for all }k\geq 1.
  \end{equation}
\end{proposition}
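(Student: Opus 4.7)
The strategy is to pass to the limit in the change-of-variable identity
$$\int_{\R^d}f(Y^n_t(x))\,d x=\int_{\R^d}f(y)\rho^n_t(y)\,d y,\qquad f\in C_c(\R^d),$$
available for the smooth approximating flows $Y^n_t$ with Jacobian densities $\rho^n_t$, combining the uniform $L^k$-estimate \eqref{Jacobian-Y} on $\rho^n_t$ with the convergence $Y^n_t\to Y_t$ from Proposition \ref{sect-2-prop-3} to extract a limiting density $\rho_t$ satisfying both conclusions.

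Fix $k\geq 2$. Since \eqref{Jacobian-Y} gives $\{\rho^n_t\}_n$ bounded in $L^k(\Omega\times B_R)$ for every $R>0$, weak compactness plus a diagonal extraction over $R\in\mathbb{N}$ produces a subsequence (still indexed by $n$) and a function $\rho_t\in L^k_{\mathrm{loc}}(\R^d;L^k(\Omega))$ with $\rho^n_t\rightharpoonup\rho_t$ weakly in each $L^k(\Omega\times B_R)$. Lower semicontinuity of the norm then yields $\sup_{y\in\R^d}\E[\rho_t(y)^k]<\infty$ (after picking a suitable pointwise version), which will deliver \eqref{sect-3-prop-0.1}.

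To identify $\rho_t$ as a density, multiply the change-of-variable identity by an arbitrary $g\in L^\infty(\Omega)$, take expectation, and send $n\to\infty$. The right-hand side passes to $\E[g\int f\rho_t\,d y]$ by weak convergence (note $gf\in L^{k/(k-1)}(\Omega\times B_R)$ when $\mathrm{supp}\,f\subset B_R$). For the left-hand side I would split the $x$-integral at $|x|=M$: on $B_M$, the $L^k$-convergence $Y^n_t\to Y_t$ combined with the continuity and boundedness of $f$ gives convergence via dominated convergence; for the tail, rewrite
$$\int_{|x|>M}f(Y^n_t(x))\,d x=\int_{\R^d}f(y)\mathbf{1}_{\{|Y^{n,-1}_t(y)|>M\}}\rho^n_t(y)\,d y$$
and combine Cauchy--Schwarz, the $L^2$-bound \eqref{Jacobian-Y}, and a uniform-in-$n$ moment bound on $|Y^{n,-1}_t(y)|$ (inherited from the corresponding bound on $X^{n,-1}_t$ in \cite{FF13a} via the identity $Y^{n,-1}_t=\phi^n_0(X^{n,-1}_t(\phi^{n,-1}_t))$ together with \eqref{sect-3-lem-1.3}) to make the tail vanish uniformly in $n$ as $M\to\infty$. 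Matching the two limits gives $\int f(Y_t(x))\,d x=\int f(y)\rho_t(y)\,d y$ a.s.\ for each $f\in C_c(\R^d)$; choosing $f$ in a countable dense subset then produces a single $\P$-null set outside of which $(Y_t)_\#\L^d$ has density $\rho_t$ with respect to $\L^d$.

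The main obstacle I expect is the tail estimate in the third step: without the uniform-in-$n$ moment control on $|Y^{n,-1}_t(y)|$, the convergence of $\int f(Y^n_t(x))\,d x$ would only survive \emph{after} averaging over $\omega$, yielding absolute continuity of $\E[(Y_t)_\#\L^d]$ but not of $(Y_t)_\#\L^d$ itself. It is precisely the Jacobian bound \eqref{sect-3-lem-1.3} on $\phi^{n,-1}_t$, together with the factorisation $Y^{n,-1}_t=\phi^n_0(X^{n,-1}_t(\phi^{n,-1}_t))$ and the known moment estimates on $X^{n,-1}_t$ from \cite{FF13a}, that closes this tail gap and upgrades the in-expectation identification to a pathwise one.
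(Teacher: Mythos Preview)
Your approach is correct but takes a different route from the paper's. You aim for the full change-of-variable \emph{identity} in the limit, which forces you to control the tail $\int_{|x|>M}f(Y^n_t(x))\,dx$ uniformly in $n$; this in turn requires moment bounds on $Y^{n,-1}_t(y)$ for $y\in\mathrm{supp}\,f$, which you correctly extract from Proposition~\ref{sect-2-prop-3} and the factorisation through $X^{n,-1}_t$. The paper bypasses all of this: restricting to nonnegative test functions and using Fatou's lemma (after passing to an a.e.\ convergent subsequence of $Y^n_t$), it obtains only the \emph{inequality} $\int\varphi(Y_t)\,dx\le\liminf_n\int\varphi\,\rho^n_t\,dy$, and then upgrades the weak convergence of $\rho^n_t$ to a.s.\ convergence of Ces\`aro means via the Banach--Saks theorem, yielding $\int\varphi(Y_t)\,dx\le\int\varphi\,\rho^{(0)}_t\,dy$ pathwise for all $\varphi\ge 0$. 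This one-sided bound already gives absolute continuity with density $\rho_t\le\rho^{(0)}_t$, hence \eqref{sect-3-prop-0.1}. Your route is more work but yields the sharper statement that the density \emph{equals} the weak limit; the paper's route is shorter and needs no information about the inverse flow. One technical point to fix: weak convergence in $L^k(\Omega\times B_R)$ plus lower semicontinuity of that norm does not give the pointwise bound $\sup_y\E[\rho_t(y)^k]<\infty$; you should extract the subsequence via weak-$\ast$ compactness in $L^\infty(\R^d;L^k(\Omega))$, as the paper does, so that the limit inherits \eqref{Jacobian-Y} directly.
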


\begin{proof}
Based on the estimate \eqref{Jacobian-Y} and Proposition \ref{sect-2-prop-3}, this result is
a consequence of \cite[Lemma 3.5]{Zhang10}. We include its proof here for the reader's convenience.
Proposition \ref{sect-2-prop-3} implies that, up to a subsequence, $Y^n_t(\omega,x)$ converges
to $Y_t(\omega,x)$ for $(\P\otimes\L^d)$-a.e. $(\omega,x)$ as $n\to\infty$.
We fix any $N>0$ and let $C_N(\R^d,\R_+)$ be the collection of nonnegative continuous functions with
support in $B_N$. Then for any $\varphi\in C_N(\R^d,\R_+)$, by Fubini's theorem and
Fatou's lemma, it holds for a.s. $\omega\in\Omega$ that
  \begin{equation}\label{sect-3-prop-0.2}
  \int_{\R^d}\varphi(Y_t(x))\,d x\leq \liminf_{n\to\infty}\int_{\R^d}\varphi(Y^n_t(x))\,d x
  =\liminf_{n\to\infty}\int_{\R^d}\varphi(y)\rho^n_t(y)\,d y
  =:\liminf_{n\to\infty}J^n_\varphi(\omega).
  \end{equation}
By \eqref{Jacobian-Y}, there exists a subsequence still denoted by $n$ and a $\rho^{(0)}_t\in
L^\infty(\R^d;L^k(\Omega))$ satisfying \eqref{sect-3-prop-0.1} such that
  $$\rho^n_t \mbox{ weakly}\ast \mbox{ converges to } \rho^{(0)}_t \mbox{ in }L^\infty(\R^d;L^k(\Omega)).$$
Since $\rho^n_t$ also converges weakly to $\rho^{(0)}_t$ in $L^2(\Omega\times B_N)$, by
Banach--Saks theorem, there is another subsequence still denoted by $n$ such that its
Ces\`{a}ro mean $\hat\rho^n_t:=\frac1n \sum_{k=1}^n \rho^n_t$ converges strongly to $\rho^{(0)}_t$
in $L^2(\Omega\times B_N)$. Therefore, up to a subsequence, $\hat\rho^n_t(\omega)$
converges to $\rho^{(0)}_t(\omega)$ in $L^2(B_N)$ for a.s. $\omega$. Hence
  $$\hat J^n_\varphi(\omega):=\frac1n \sum_{k=1}^nJ^n_\varphi(\omega)
  =\int_{\R^d}\varphi(y)\hat\rho^n_t(\omega,y)\,d y \longrightarrow
  \int_{\R^d}\varphi(y)\hat\rho^{(0)}_t(\omega,y)\,d y \quad \mbox{as } n\to \infty.$$
Combining this limit together with \eqref{sect-3-prop-0.2} gives us
  $$\int_{\R^d}\varphi(Y_t(x))\,d x\leq \liminf_{n\to\infty}J^n_\varphi(\omega)
  \leq \lim_{n\to\infty}\hat J^n_\varphi(\omega)=\int_{\R^d}\varphi(y)\hat\rho^{(0)}_t(\omega,y)\,d y.$$
The separability of $C_N(\R^d,\R_+)$ implies that there is a common full set $\hat\Omega$
such that the above inequality holds for all $\omega\in\hat\Omega$ and $\varphi\in C_N(\R^d,\R_+)$.
Since $N>0$ is arbitrary, we conclude that $(Y_t)_\#\L^d$ is absolutely continuous
with respect to $\L^d$ and the density function $\rho_t\leq \hat\rho^{(0)}_t$.
Hence the estimate \eqref{sect-3-prop-0.1} holds.
\end{proof}

To show that the push-forward $(Y_t)_\#\L^d$ is in fact equivalent to $\L^d$, we intend to
give in the following an explicit expression for the Radon--Nikodym density $\rho_t$.
To this end, we shall prove that the density functions $\bar\rho^n_t$
defined in \eqref{density} are convergent to
  \begin{equation}\label{density-1}
  \bar\rho_t(x)=\exp\bigg\{\int_0^t\big\<\div(\tilde\sigma)(s,Y_s(x)),d W_s\big\>
  +\int_0^t\Big[\div(\tilde b)-\frac12\<\nabla\tilde\sigma,
  (\nabla\tilde\sigma)^\ast\>\Big](s,Y_s(x))\,d s\bigg\}
  \end{equation}
in some sense. We need the technical result below.

\begin{lemma}\label{sect-3-lem-1}
Let $f\in L_{\tilde p}^{\tilde q}(T)$ with $\tilde p,\tilde q$ satisfying \eqref{sect-1.2}.
Then for any $k\geq 1$, there exists a constant $C_{k,f}$ depending on $k$ and
$\|f\|_{L_{\tilde p}^{\tilde q}(T)}$ such that
  $$\sup_{n\geq 1}\sup_{x\in\R^d}\E\Big[e^{k\int_0^T |f(t,Y^n_t(x))|^2\,d t}\Big]\leq C_{k,f}.$$
\end{lemma}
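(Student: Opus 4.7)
The plan is to reduce the exponential moment estimate for $Y^n_t$ to the corresponding estimate for $X^n_t$ stated in \eqref{sect-2-thm-1.2}, using the identity $Y^n_t(x)=\phi^n_t\bigl(X^n_t(\phi^{n,-1}_0(x))\bigr)$ recorded at the end of Section 2. Define the auxiliary function
$$g^n(t,z):=f\bigl(t,\phi^n_t(z)\bigr),\quad (t,z)\in[0,T]\times\R^d.$$
Then $|f(t,Y^n_t(x))|^2=|g^n(t,X^n_t(\phi^{n,-1}_0(x)))|^2$, so it will suffice to control the exponential moment of $\int_0^T|g^n(t,X^n_t(y))|^2\,dt$ uniformly in $n$ and in the starting point $y\in\R^d$, and then take $y=\phi^{n,-1}_0(x)$ and the supremum in $x$ (which equals the supremum in $y$, since $\phi^n_0$ is a bijection).

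First I would show that $\|g^n\|_{L^{\tilde q}_{\tilde p}(T)}$ is bounded by a multiple of $\|f\|_{L^{\tilde q}_{\tilde p}(T)}$ uniformly in $n$. Indeed, by \eqref{sect-3.2} we have $\|\nabla u^n(t,\cdot)\|_\infty\le 1/2$, so the same computation that led to \eqref{sect-3-lem-1.3} gives $\|\nabla\phi^{n,-1}_t\|_{op}\le 2$, hence $|\det\nabla\phi^{n,-1}_t(y)|\le 2^d$ uniformly in $n$, $t$, $y$. A change of variables $y=\phi^n_t(z)$ in the spatial integral then yields
$$\|g^n(t,\cdot)\|_{L^{\tilde p}}^{\tilde p}=\int_{\R^d}|f(t,y)|^{\tilde p}\,|\det\nabla\phi^{n,-1}_t(y)|\,dy\le 2^d\|f(t,\cdot)\|_{L^{\tilde p}}^{\tilde p},$$
and integrating in time gives $\|g^n\|_{L^{\tilde q}_{\tilde p}(T)}\le 2^{d/\tilde p}\|f\|_{L^{\tilde q}_{\tilde p}(T)}$ for every $n$.

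Next I would apply estimate \eqref{sect-2-thm-1.2} of Theorem \ref{sect-2-thm-1} to the process $X^n_t$, which is the unique strong solution to \eqref{SDE} with drift $b^n$ in place of $b$. This gives
$$\sup_{y\in\R^d}\E\Bigl[e^{k\int_0^T|g^n(t,X^n_t(y))|^2\,dt}\Bigr]\le C_{g^n,b^n},$$
where the constant depends on $\|g^n\|_{L^{\tilde q}_{\tilde p}(T)}$ and on $\|b^n\|_{L^q_p(T)}$. By the previous step the first quantity is controlled by $\|f\|_{L^{\tilde q}_{\tilde p}(T)}$, and by the convergence \eqref{sect-3.1} the second satisfies $\sup_n\|b^n\|_{L^q_p(T)}<\infty$; inspecting the Khasminskii-type argument underlying \eqref{sect-2-thm-1.2} (which is monotone in these two norms) one sees that the constant can be chosen uniformly in $n$, giving a single constant $C_{k,f}$. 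Combining with the identity for $Y^n_t$ and taking $\sup_{x\in\R^d}$ completes the proof.

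The main obstacle is ensuring that the constant in \eqref{sect-2-thm-1.2} is truly uniform in $n$, i.e.\ that it depends on $b^n$ only through $\|b^n\|_{L^q_p(T)}$ in a monotone way. This is implicit in the Krylov--R\"ockner framework used to prove Theorem \ref{sect-2-thm-1}, but one should either invoke this explicitly or sketch why $C_{f,b^n}$ can be bounded by a constant depending only on $\sup_n\|b^n\|_{L^q_p(T)}$ and $\|f\|_{L^{\tilde q}_{\tilde p}(T)}$; apart from this point the argument reduces to an elementary change of variables and an application of a result already in hand.
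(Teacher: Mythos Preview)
Your proof is correct and follows essentially the same route as the paper's: define $g^n(t,z)=f(t,\phi^n_t(z))$, bound $\|g^n\|_{L^{\tilde q}_{\tilde p}(T)}$ uniformly in $n$ via the Jacobian estimate \eqref{sect-3-lem-1.3}, and then apply \eqref{sect-2-thm-1.2} to $X^n_t$ with the starting point $\phi^{n,-1}_0(x)$. If anything, you are more careful than the paper about the uniformity in $n$ of the constant coming from \eqref{sect-2-thm-1.2}; the paper simply asserts the uniform bound \eqref{sect-3-lem-1.1} without isolating the dependence on $\|b^n\|_{L^q_p(T)}$.
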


\begin{proof}
This result is a consequence of Theorem \ref{sect-2-thm-1}. Indeed, since $X^n_t$ is the
solution to \eqref{SDE} with $b$ replaced by $b^n$, then \eqref{sect-2-thm-1.2} implies
  \begin{align}\label{sect-3-lem-1.1}
  \sup_{n\geq 1}\sup_{x\in\R^d}\E\Big[e^{k\int_0^T |f(t,X^n_t(x))|^2\,d t}\Big]\leq C_{k,f},
  \end{align}
where $C_{k,f}$ depends on $k$ and $\|f\|_{L_{\tilde p}^{\tilde q}(T)}$. Recall that $Y^n_t$ is
related to $X^n_t$ by the diffeomorphism $\phi^n_t$: $Y^n_t=\phi^n_t\big(X^n_t(\phi^{n,-1}_0)\big)$.
Thus
  $$\E\Big[e^{k\int_0^T |f(t,Y^n_t(x))|^2\,d t}\Big]
  =\E\bigg[\exp\bigg(k\int_0^T \big|f\big(t,\phi^n_t\big[X^n_t(\phi^{n,-1}_0(x))\big]\big)\big|^2\,d t\bigg)\bigg].$$
Consider the new function $g^n(t,x)=f(t,\phi^n_t(x))$. By the change of variable,
  \begin{align}\label{sect-3-lem-1.2}
  \int_0^T\bigg(\int_{\R^d}|g^n(t,x)|^{\tilde p}d x\bigg)^{\frac{\tilde q}{\tilde p}}d t
  &=\int_0^T\bigg(\int_{\R^d}|f(t,y)|^{\tilde p}\big|\det\big(\nabla\phi^{n,-1}_t(y)\big)\big|
  d y\bigg)^{\frac{\tilde q}{\tilde p}}d t.
  \end{align}
Inequality \eqref{sect-3-lem-1.3} implies
  $$\sup_{n\geq 1}\sup_{y\in\R^d}\big|\det\big(\nabla\phi^{n,-1}_t(y)\big)\big|<\infty,$$
which, together with \eqref{sect-3-lem-1.2}, leads to
  $$\sup_{n\geq 1}\|g^n\|_{L_{\tilde p}^{\tilde q}(T)}<\infty.$$
Combining this estimate with \eqref{sect-3-lem-1.1} completes the proof.
\end{proof}

We now prove the following result which is analogous to \cite[Lemma 3.5]{Luo09}.

\begin{lemma}[Uniform estimate of Radon--Nikodym densities]\label{sect-3-lem-2}
For any $k\in\R$, it holds
  $$\sup_{n\geq 1}\sup_{t\leq T}\sup_{x\in\R^d}\E\big[(\bar\rho^n_t(x))^k\big]<+\infty.$$
\end{lemma}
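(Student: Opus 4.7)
The plan is to exploit the explicit formula \eqref{density} for $\bar\rho^n_t$, splitting the exponent into a stochastic integral
$$M^n_t:=\int_0^t\<\div(\tilde\sigma^n)(s,Y^n_s),d W_s\>$$
and a Lebesgue integral $N^n_t$, and then estimating both pieces uniformly in $n$.

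First I would derive pointwise bounds on the integrands. Using $\tilde\sigma^n(s,y)=Id+\nabla u^n(s,\phi^{n,-1}_s(y))$ and $\tilde b^n(s,y)=\lambda u^n(s,\phi^{n,-1}_s(y))$ together with the chain rule, the bound $\|\nabla u^n\|_\infty\leq 1/2$ from \eqref{sect-3.2}, and $\|\nabla\phi^{n,-1}_s\|_{op}\leq 2$ from \eqref{sect-3-lem-1.3}, one obtains
$$|\div(\tilde b^n)(s,y)|\leq C_1,\qquad |\div(\tilde\sigma^n)(s,y)|^2+|\<\nabla\tilde\sigma^n,(\nabla\tilde\sigma^n)^\ast\>(s,y)|\leq C_2\,|\nabla^2 u^n(s,\phi^{n,-1}_s(y))|^2,$$
with $C_1,C_2$ depending only on $d$ and $\lambda$. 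Hence only the $\nabla^2 u^n$-terms require exponential moment control. The identity $\phi^{n,-1}_s(Y^n_s(x))=X^n_s(\phi^{n,-1}_0(x))$, read off from $Y^n_s=\phi^n_s\circ X^n_s\circ\phi^{n,-1}_0$, reduces the troublesome quantities to integrals of $|\nabla^2 u^n(s,X^n_s)|^2$. Since $\sup_n\|\nabla^2 u^n\|_{L_p^q(T)}<\infty$ by Lemma \ref{sect-2-lem-2}(iv) and the constant in the Krylov-type bound \eqref{sect-2-thm-1.2} depends only on the $L_p^q(T)$-norm of the test function, exactly as exploited in the proof of Lemma \ref{sect-3-lem-1}, I would derive the key intermediate estimate
$$\sup_{n\geq 1}\sup_{x\in\R^d}\E\bigg[\exp\bigg(c\int_0^T|\nabla^2 u^n(s,X^n_s(x))|^2\,d s\bigg)\bigg]\leq C(c)\quad\text{for every }c>0.$$

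With this at hand, the Lebesgue part is immediate: $|N^n_t|\leq C_1 T+\tfrac12 C_2\int_0^T|\nabla^2 u^n(s,X^n_s)|^2\,ds$, so $\sup_{n,t,x}\E[e^{\alpha N^n_t}]<\infty$ for every $\alpha\in\R$. For the martingale part, I would use the factorization
$$e^{\alpha M^n_t}=\mathcal{E}(2\alpha M^n)_t^{1/2}\cdot\exp\big(\alpha^2\<M^n\>_t\big)$$
and Cauchy--Schwarz to obtain $\E[e^{\alpha M^n_t}]\leq\E[\mathcal{E}(2\alpha M^n)_t]^{1/2}\,\E[e^{2\alpha^2\<M^n\>_t}]^{1/2}$. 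Because $\<M^n\>_T\leq C_2\int_0^T|\nabla^2 u^n(s,X^n_s)|^2\,ds$, the intermediate bound above simultaneously verifies Novikov's condition for $\mathcal{E}(2\alpha M^n)$---so its expectation equals $1$---and bounds the second factor uniformly in $n$. A final Cauchy--Schwarz on $(\bar\rho^n_t)^k=e^{kM^n_t}\cdot e^{kN^n_t}$ then delivers the claimed uniform estimate.

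The main obstacle is securing Novikov's condition \emph{uniformly in $n$}: this requires the uniform $L_p^q(T)$ bound on $\nabla^2 u^n$ from Lemma \ref{sect-2-lem-2}(iv) to combine cleanly with the fact that the constant in \eqref{sect-2-thm-1.2} depends only on the $L_p^q(T)$-norm of the test function, so that it tolerates the simultaneous replacement of $f$ by the $n$-dependent sequence $|\nabla^2 u^n|$ and of $X$ by $X^n$, exactly the mechanism used in the proof of Lemma \ref{sect-3-lem-1}. Once uniform Novikov is established, everything else reduces to the substitution $\phi^{n,-1}_s\circ Y^n_s=X^n_s\circ\phi^{n,-1}_0$ and two routine applications of Cauchy--Schwarz.
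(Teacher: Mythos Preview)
Your proposal is correct and follows essentially the same route as the paper: both start from the explicit formula \eqref{density}, isolate the stochastic integral, verify Novikov's condition via the uniform bound $\sup_n\|\nabla^2 u^n\|_{L_p^q(T)}<\infty$ together with the Krylov-type exponential estimate \eqref{sect-2-thm-1.2}, and finish with Cauchy--Schwarz. The only cosmetic difference is that you substitute $\phi^{n,-1}_s\circ Y^n_s=X^n_s\circ\phi^{n,-1}_0$ at the outset and work directly with $|\nabla^2 u^n(s,X^n_s)|^2$, whereas the paper keeps the integrands in terms of $\nabla\tilde\sigma^n$ along $Y^n_s$ and invokes Lemma~\ref{sect-3-lem-1} (which performs that same substitution internally).
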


\begin{proof}
We denote by $\xi^n=\div(\tilde b^n)
-\frac12\<\nabla\tilde\sigma^n,(\nabla\tilde\sigma^n)^\ast\>$ to simplify notations. Then
  \begin{align*}
  (\bar\rho^n_t(x))^k&=\exp\bigg\{k\int_0^t\big\<\div(\tilde\sigma^n)(s,Y^n_s(x)),d W_s\big\>
  +k\int_0^t\xi^n(s,Y^n_s(x))\,d s\bigg\}\cr
  &=\exp\bigg\{k\int_0^t\big\<\div(\tilde\sigma^n)(s,Y^n_s(x)),d W_s\big\>
  -k^2\int_0^t|\div(\tilde\sigma^n)(s,Y^n_s(x))|^2\,d s\bigg\}\cr
  &\hskip13pt \times\exp\bigg\{\int_0^t \big(k^2|\div(\tilde\sigma^n)|^2+k\xi^n\big)(s,Y^n_s(x))\,d s\bigg\}.
  \end{align*}
Thus by Cauchy's inequality,
  \begin{align}\label{sect-3-lem-2.1}
  \E\big[(\bar\rho^n_t(x))^k\big]&\leq \bigg(\E \exp\bigg\{2k\int_0^t\big\<\div(\tilde\sigma^n)(s,Y^n_s(x)),d W_s\big\>
  -2k^2\int_0^t|\div(\tilde\sigma^n)(s,Y^n_s(x))|^2\,d s\bigg\}\bigg)^{\frac12}\cr
  &\hskip13pt\times \bigg(\E\exp\bigg\{2\int_0^t \big(k^2|\div(\tilde\sigma^n)|^2+k\xi^n\big)(s,Y^n_s(x))\,d s\bigg\} \bigg)^{\frac12}.
  \end{align}
By Novikov's criterion and Lemma \ref{sect-3-lem-1}, to show that the first
exponential is a martingale, it suffices to prove that $\div(\tilde\sigma^n)\in L_p^q(T)$.
This is a consequence of Lemma \ref{sect-2-lem-2}(iv) and the definition of $\tilde\sigma^n$.
Next by \eqref{sect-3.2} and \eqref{sect-3-lem-1.3},
  \begin{align*}
  |\xi^n|\leq |\div(\tilde b^n)|+\frac12|\<\nabla\tilde\sigma^n,(\nabla\tilde\sigma^n)^\ast\>|
  &\leq \lambda \|\nabla u^n(t,\cdot)\|_\infty\|\nabla\phi^{n,-1}_t\|_\infty+\frac12 |\nabla\tilde\sigma^n|^2\cr
  &\leq C_\lambda+\frac12 |\nabla\tilde\sigma^n|^2.
  \end{align*}
Hence the second expectation in \eqref{sect-3-lem-2.1} is dominated by
  \begin{align*}
  &\hskip13pt \sup_{n\geq 1}\sup_{x\in\R^d}\E\exp\bigg\{2\int_0^T
  \big(k^2|\div(\tilde\sigma^n)|^2+|k\xi^n|\big)(s,Y^n_s(x))\,d s\bigg\}\cr
  &\leq e^{C_\lambda |k| T}\sup_{n\geq 1}\sup_{x\in\R^d}\E\exp\bigg\{(2k^2+|k|)\int_0^T \big|\nabla\tilde\sigma^n(s,Y^n_s(x))\big|^2\,d s\bigg\}<\infty,
  \end{align*}
where the last inequality follows from Lemmas \ref{sect-3-lem-1} and \ref{sect-2-lem-2}(iv).
\end{proof}

Now we show that the three integrals in the bracket on the right hand side of \eqref{density}
converge to the corresponding ones in \eqref{density-1}. First we have

\begin{proposition}\label{sect-3-prop-3}
For any $R>0$, it holds that
  $$\lim_{n\to\infty}\E\int_{B_R}\sup_{0\leq t\leq T}\bigg|
  \int_0^t\div(\tilde b^n)(s,Y^n_s(x))\,d s-\int_0^t\div(\tilde b)(s,Y_s(x))\,d s\bigg|d x=0,$$
  $$\lim_{n\to\infty}\E\int_{B_R}\sup_{0\leq t\leq T}\bigg|
  \int_0^t\<\nabla\tilde \sigma^n,(\nabla\tilde \sigma^n)^\ast\>(s,Y^n_s(x))\,d s
  -\int_0^t\<\nabla\tilde \sigma,(\nabla\tilde \sigma)^\ast\>(s,Y_s(x))\,d s\bigg|d x=0,$$
  $$\lim_{n\to\infty}\E\int_{B_R}\sup_{0\leq t\leq T}\bigg|
  \int_0^t\big\<\div(\tilde\sigma^n)(s,Y^n_s(x)),d W_s\big\>
  -\int_0^t\big\<\div(\tilde\sigma)(s,Y_s(x)),d W_s\big\>\bigg|d x=0.$$
\end{proposition}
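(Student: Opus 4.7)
The plan is to handle the three limits in parallel via a decomposition-plus-mollification scheme, using Lemma \ref{sect-3-lem-1} as the main analytic tool. For each of the three integrand pairs $(g^n, g)$---namely $(\div(\tilde b^n),\div(\tilde b))$, $\big(\<\nabla\tilde\sigma^n,(\nabla\tilde\sigma^n)^\ast\>,\<\nabla\tilde\sigma,(\nabla\tilde\sigma)^\ast\>\big)$ and $(\div(\tilde\sigma^n),\div(\tilde\sigma))$---I would decompose
$$g^n(s,Y^n_s(x))-g(s,Y_s(x))=\big[g^n(s,Y^n_s(x))-g(s,Y^n_s(x))\big]+\big[g(s,Y^n_s(x))-g(s,Y_s(x))\big]$$
and treat the two differences separately.

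A preliminary observation is that the coefficients converge in $L_p^q(T)$. Writing $\tilde b^n(t,y)=\lambda u^n(t,\phi^{n,-1}_t(y))$ and $\tilde\sigma^n(t,y)=Id+\nabla u^n(t,\phi^{n,-1}_t(y))$, each $g^n$ becomes, via the chain rule, a pointwise product of $\nabla u^n$ or $\nabla^2 u^n$ (composed with $\phi^{n,-1}_t$) and $\nabla\phi^{n,-1}_t$. Combining the $H_{2,p}^q(T)$-convergence $u^n\to u$ from Lemma \ref{sect-2-lem-2}(ii), the uniform bound \eqref{sect-3-lem-1.3} on $\nabla\phi^{n,-1}_t$, and a change of variable in the $L_p^q$ integral, one obtains $\|g^n-g\|_{L_p^q(T)}\to 0$ in all three cases.

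To bound the first bracket, I would use a rescaling trick: writing $f^n:=(g^n-g)/\|g^n-g\|_{L_p^q(T)}$, the normalized sequence satisfies $\|f^n\|_{L_p^q(T)}=1$, so Lemma \ref{sect-3-lem-1} (with $k=1$) together with the elementary inequality $e^a\ge 1+a$ yields a uniform bound $\sup_{n\ge 1}\sup_{x\in\R^d}\E\int_0^T |f^n(s,Y^n_s(x))|^2\,ds<\infty$. Multiplying by $\|g^n-g\|^2_{L_p^q(T)}$, integrating over $B_R$, and applying Fubini and Cauchy--Schwarz disposes of the first-bracket contribution for the two deterministic statements (dominating $\sup_t$ by the integral over $[0,T]$). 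For the stochastic statement, Burkholder--Davis--Gundy first converts $\E\sup_t|\cdot|$ into $(\E\int_0^T|\cdot|^2\,ds)^{1/2}$, after which the same estimate applies.

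To bound the second bracket I would mollify: choose $g_\epsilon\in C_c^\infty([0,T]\times\R^d)$ with $\|g_\epsilon-g\|_{L_p^q(T)}\to 0$, and split
$$g(s,Y^n_s)-g(s,Y_s)=(g-g_\epsilon)(s,Y^n_s)+\big[g_\epsilon(s,Y^n_s)-g_\epsilon(s,Y_s)\big]+(g_\epsilon-g)(s,Y_s).$$
The two outer terms are controlled uniformly in $n$ by the same rescaling trick applied to $Y^n$ and $Y$ respectively; the required Krylov moment for the limit flow $Y$ follows either by passing to the limit in the uniform estimate for $Y^n$ (Fatou combined with Proposition \ref{sect-2-prop-3}) or by repeating the proof of Lemma \ref{sect-3-lem-1} for $Y$. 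The middle term is controlled via the Lipschitz continuity of $g_\epsilon$ together with the $L^k$-convergence $\sup_{t\le T,\,x\in B_R}\E|Y^n_t(x)-Y_t(x)|^k\to 0$ from Proposition \ref{sect-2-prop-3}. The principal difficulty lies precisely here: because $g$ is only in $L_p^q(T)$ and not continuous, pointwise convergence $Y^n_s\to Y_s$ cannot be exploited directly, and the mollification scheme together with matching Krylov-type moment bounds on both $Y^n$ and $Y$ is essential to bypass this lack of regularity. Choosing $\epsilon$ first to make the outer contribution small uniformly in $n$, and then letting $n\to\infty$, completes the argument.
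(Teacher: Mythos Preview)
Your decomposition-plus-mollification scheme is sound for the first and third limits, but there is a genuine gap in your treatment of the second (quadratic) limit. You assert that $\|g^n-g\|_{L_p^q(T)}\to 0$ ``in all three cases'', yet for $g^n=\langle\nabla\tilde\sigma^n,(\nabla\tilde\sigma^n)^\ast\rangle$ this is not justified: since $\nabla\tilde\sigma^n$ is built from $\nabla^2 u^n$ composed with $\phi^{n,-1}_t$, Lemma~\ref{sect-2-lem-2}(iv) places $\nabla\tilde\sigma^n$ only in $L_p^q(T)$, so the quadratic expression $\langle\nabla\tilde\sigma^n,(\nabla\tilde\sigma^n)^\ast\rangle$ lies a priori only in $L_{p/2}^{q/2}(T)$. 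The halved exponents $p/2,q/2$ need not satisfy \eqref{sect-1.2} (that would require $\tfrac{d}{p}+\tfrac{2}{q}<\tfrac12$, which is strictly stronger than the standing hypothesis), so your rescaling trick via Lemma~\ref{sect-3-lem-1} cannot be invoked on this $g^n$, and the same obstruction reappears in the mollification step where you approximate $g$ in $L_p^q(T)$. The remedy is to exploit the bilinear structure first: write
$\langle\nabla\tilde\sigma^n,(\nabla\tilde\sigma^n)^\ast\rangle-\langle\nabla\tilde\sigma,(\nabla\tilde\sigma)^\ast\rangle
=\langle\nabla\tilde\sigma^n-\nabla\tilde\sigma,(\nabla\tilde\sigma^n)^\ast\rangle
+\langle\nabla\tilde\sigma,(\nabla\tilde\sigma^n-\nabla\tilde\sigma)^\ast\rangle$,
and then pair the two factors by Cauchy--Schwarz over $\Omega\times[0,T]$, applying your Lemma~\ref{sect-3-lem-1} bound to each $L_p^q$ factor separately; an analogous splitting (mollifying $\nabla\tilde\sigma$ rather than the full quadratic form) is needed in the second bracket.

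For limits (i) and (iii) your route is valid but genuinely different from the paper's. The paper does not appeal to Lemma~\ref{sect-3-lem-1} at all here; instead it changes variables $y=Y^n_s(x)$, introduces the push-forward density $\rho^n_s$, and uses the uniform Jacobian moment bound \eqref{Jacobian-Y} (respectively \eqref{sect-3-prop-0.1} for the limit flow) together with H\"older to strip the flow and reduce directly to $\|g^n-g\|_{L_p^q(T)}$. For the quadratic limit the paper carries out exactly the bilinear splitting above, pairing the factors with H\"older exponents $p,p'$ against the density rather than with Cauchy--Schwarz against a Krylov moment. Your Krylov-moment approach has the minor advantage of not needing Proposition~\ref{sect-3-prop-0} or \eqref{Jacobian-Y} as input, while the paper's density argument makes it more transparent why the bilinear splitting in (ii) is unavoidable.
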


\begin{proof}
The proofs of the three limits have some points in common, but there are minor
differences that should be taken care of. We prove them separately.

(i) We denote by $I_n$ the term on the left hand side of the first limit. Then
  \begin{align*}
  I_n&\leq \E\int_0^T\!\!\int_{B_R}\big|\div(\tilde b^n)(s,Y^n_s(x))
  -\div(\tilde b)(s,Y^n_s(x))\big|\,d xd s\cr
  &\hskip13pt +\E\int_0^T\!\!\int_{B_R}\big|\div(\tilde b)(s,Y^n_s(x))
  -\div(\tilde b)(s,Y_s(x))\big|\,d xd s
  \end{align*}
which are written as $I^n_1$ and $I^n_2$ respectively. Recall that $p\geq 2,q>2$ satisfies
\eqref{sect-1.2}. First by H\"older's inequality ($p'$ is the conjugate number of $p$),
  \begin{align*}
  I^n_1&\leq \int_0^T \bigg(\E\int_{B_R}1^{p'}\,d x\bigg)^{\frac1{p'}}
  \bigg(\E\int_{B_R}\big|\big(\div(\tilde b^n)-\div(\tilde b)\big)(s,Y^n_s(x))\big|^p
  \,d x\bigg)^{\frac1{p}}d s\cr
  &\leq C_{p,R}\int_0^T \bigg(\E\int_{\R^d}\big|\big(\div(\tilde b^n)
  -\div(\tilde b)\big)(s,y)\big|^p\rho^n_s(y)\,d y\bigg)^{\frac1{p}}d s.
  \end{align*}
Thus by \eqref{Jacobian-Y}, we have
  \begin{align*}
  I^n_1&\leq C'_{p,R}\int_0^T \bigg(\int_{\R^d}\big|\big(\div(\tilde b^n)
  -\div(\tilde b)\big)(s,y)\big|^p\,d y\bigg)^{\frac1{p}}d s\cr
  &\leq C'_{p,R,T}\bigg[\int_0^T \bigg(\int_{\R^d}\big|\big(\div(\tilde b^n)
  -\div(\tilde b)\big)(s,y)\big|^p\,d y\bigg)^{\frac q{p}}d s\bigg]^{\frac1q}\cr
  &=C'_{p,R,T} \big\|\div(\tilde b^n)-\div(\tilde b)\big\|_{L_p^q(T)}.
  \end{align*}
By the definition of $\tilde b^n,\tilde b$ and Lemma \ref{sect-2-lem-2}(ii), we conclude that
  \begin{equation}\label{sect-3-prop-3.1}
  \lim_{n\to\infty} I^n_1=0.
  \end{equation}

Now we deal the second term $I^n_2$. For any $\eps>0$, we can find $f\in C_c([0,T]\times\R^d)$
such that $\|\div(\tilde b)-f\|_{L_p^q(T)}<\eps$. Then
  \begin{align*}
  I^n_2&\leq \E\int_0^T\!\!\int_{B_R}\big|\big(\div(\tilde b)-f\big)(s,Y^n_s(x))\big|\,d xd s
  +\E\int_0^T\!\!\int_{B_R}\big|\big(f-\div(\tilde b)\big)(s,Y_s(x))\big|\,d xd s\cr
  &\hskip13pt +\E\int_0^T\!\!\int_{B_R}\big|f(s,Y^n_s(x))-f(s,Y_s(x))\big|\,d xd s
  :=I^n_{2,1}+I^n_{2,2}+I^n_{2,3}.
  \end{align*}
Analogous to the treatment of $I^n_1$, we have by H\"older's inequality and the
estimate \eqref{Jacobian-Y} that
  \begin{align*}
  I^n_{2,1}&\leq C_{p,R}\int_0^T\bigg(\E\int_{\R^d}\big|\big(\div(\tilde b)-f\big)(s,y)\big|^p
  \rho^n_s(y)\,d y\bigg)^{\frac1p}d s\cr
  &\leq C'_{p,R,T}\|\div(\tilde b)-f\|_{L_p^q(T)}<C'_{p,R,T}\eps.
  \end{align*}
In the same way, by \eqref{sect-3-prop-0.1}, we obtain $I^n_{2,2}<C'_{p,R,T}\eps$.
Next the dominated convergence theorem and Proposition \ref{sect-2-prop-3} yield
$\lim_{n\to\infty}I^n_{2,3}=0$. Summarizing these discussions, we get
$\lim_{n\to\infty}I^n_{2}=0$. Combining this result with the limit \eqref{sect-3-prop-3.1},
we obtain the first result.

(ii) We denote by $J^n$ the quantity in the second limit. Similarly we have
  \begin{align*}
  J^n&\leq \E\int_0^T\!\!\int_{B_R}\big|\big(\<\nabla\tilde\sigma^n,(\nabla\tilde\sigma^n)^\ast\>
  -\<\nabla\tilde\sigma,(\nabla\tilde\sigma)^\ast\>\big)(s,Y^n_s(x))\big|\,d xd s\cr
  &\hskip13pt +\E\int_0^T\!\!\int_{B_R}\big|\<\nabla\tilde\sigma,(\nabla\tilde\sigma)^\ast\>(s,Y^n_s(x))
  -\<\nabla\tilde\sigma,(\nabla\tilde\sigma)^\ast\>(s,Y_s(x))\big|\,d xd s
  \end{align*}
which will be denoted by $J^n_1$ and $J^n_2$. In the following we shall
assume $p>2$ (which is the case when the dimension $d\geq 2$); in fact, the case $p=2$
is simpler. Again by triangular inequality,
  \begin{align*}
  J^n_1&\leq \E\int_0^T\!\!\int_{B_R}\big|\<\nabla\tilde\sigma^n-\nabla\tilde\sigma,
  (\nabla\tilde\sigma^n)^\ast\>(s,Y^n_s(x))\big|\,d xd s\cr
  &\hskip13pt +\E\int_0^T\!\!\int_{B_R}\big|\<\nabla\tilde\sigma,
  (\nabla\tilde\sigma^n)^\ast-(\nabla\tilde\sigma)^\ast\>(s,Y^n_s(x))\big|\,d xd s
  =:J^n_{1,1}+J^n_{1,2}.
  \end{align*}
Since $p$ and $q$ are strictly greater than 2, their conjugate numbers $p',q'<2$. By H\"older's
inequality and \eqref{Jacobian-Y}, we have for all $s\in[0,T]$,
  \begin{align*}
  \E\int_{B_R}\big|\nabla\tilde\sigma^n(s,Y^n_s(x))\big|^{p'}\,d x
  &\leq C_{p,R}\bigg(\E\int_{B_R}\big|\nabla\tilde\sigma^n(s,Y^n_s(x))\big|^p\,d x\bigg)^{\frac{p'}p}\cr
  &\leq C'_{p,R}\bigg(\int_{\R^d}\big|\nabla\tilde\sigma^n(s,y)\big|^p\,d y\bigg)^{\frac{p'}p}.
  \end{align*}
Therefore
  \begin{align}\label{sect-3-prop-3.2}
  \int_0^T\bigg(\E\int_{B_R}\big|\nabla\tilde\sigma^n(s,Y^n_s(x))\big|^{p'}\,d x\bigg)^{\frac{q'}{p'}}\,d s
  &\leq C''_{p,R}\int_0^T\bigg(\int_{\R^d}\big|\nabla\tilde\sigma^n(s,y)\big|^p\,d y\bigg)^{\frac{q'}p}d s\cr
  &\leq C_{p,q,R,T}\|\nabla\tilde\sigma^n\|_{L_p^q(T)}^{q'}
  \end{align}
which, by the definition of $\tilde\sigma^n$ and Lemma \ref{sect-2-lem-2}(iv), is uniformly
bounded from above. In view of these discussions, an application of H\"older's inequality leads to
  \begin{align*}
  J^n_{1,1}&\leq \int_0^T\bigg(\E\int_{B_R}\big|\big(\nabla\tilde\sigma^n
  -\nabla\tilde\sigma\big)(s,Y^n_s(x))\big|^p\,d x\bigg)^{\frac1{p}}
  \bigg(\E\int_{B_R}\big|\nabla\tilde\sigma^n(s,Y^n_s(x))\big|^{p'}\,d x\bigg)^{\frac1{p'}}d s\cr
  &\leq C\int_0^T\bigg(\int_{\R^d}\big|\big(\nabla\tilde\sigma^n
  -\nabla\tilde\sigma\big)(s,y)\big|^p\,d y\bigg)^{\frac1{p}}
  \bigg(\E\int_{\R^d}\big|\nabla\tilde\sigma^n(s,y)\big|^{p'}\,d y\bigg)^{\frac1{p'}}d s
  \end{align*}
which, by \eqref{sect-3-prop-3.2} and Lemma \ref{sect-2-lem-2}(iv), is less than
  \begin{align*}
  C\|\nabla\tilde\sigma^n-\nabla\tilde\sigma\|_{L_p^q(T)}\|\nabla\tilde\sigma^n\|_{L_p^q(T)}
  \leq \tilde C\|\nabla\tilde\sigma^n-\nabla\tilde\sigma\|_{L_p^q(T)}.
  \end{align*}
Consequently, we deduce from Lemma \ref{sect-2-lem-2}(ii) that $\lim_{n\to\infty}J^n_{1,1}=0$.
Analogous arguments give us $\lim_{n\to\infty}J^n_{1,2}=0$. Therefore
  \begin{equation}\label{sect-3-prop-3.3}
  \lim_{n\to\infty}J^n_1=0.
  \end{equation}

Regarding the estimate of $J^n_2$, we first find a tensor-valued function
$f\in C_c([0,T]\times\R^d;(\R^d)^{\otimes3})$ such that $\|\nabla\tilde\sigma-f\|_{L_p^q(T)}<\eps$,
and then estimate it as below:
  \begin{align*}
  J^n_2&\leq \E\int_0^T\!\!\int_{B_R}\big|\big(\<\nabla\tilde\sigma,(\nabla\tilde\sigma)^\ast\>
  -\<f,f^\ast\>\big)(s,Y^n_s(x))\big|\,d xd s\cr
  &\hskip13pt +\E\int_0^T\!\!\int_{B_R}\big|\big(\<f,f^\ast\>
  -\<\nabla\tilde\sigma,(\nabla\tilde\sigma)^\ast\>\big)(s,Y_s(x))\big|\,d xd s\cr
  &\hskip13pt +\E\int_0^T\!\!\int_{B_R}\big|\<f,f^\ast\>(s,Y^n_s(x))
  -\<f,f^\ast\>(s,Y_s(x))\big|\,d xd s.
  \end{align*}
For the last term, the dominated convergence theorem yields that its limit is 0 as $n\to\infty$.
The first two terms can be dealt with as for $J^n_{1,1}$, and we conclude that they are bounded by
$C\eps$ for some $C>0$ by the choice of the function $f$. As a result, $\lim_{n\to\infty}J^n_2=0$.
Combining this with \eqref{sect-3-prop-3.3}, we see that the second limit is also zero.

(iii) Finally, we denote by $K^n$ the quantity in the last limit. By Burkholder's inequality,
  \begin{align*}
  K^n&\leq C\int_{B_R}\E\bigg[\bigg(\int_0^T \big|\div(\tilde\sigma^n)(s,Y^n_s(x))
  -\div(\tilde\sigma)(s,Y_s(x))\big|^2\,d s\bigg)^{\frac12}\bigg]d x\cr
  &\leq C_R\bigg(\int_0^T \E\int_{B_R}\big|\div(\tilde\sigma^n)(s,Y^n_s(x))
  -\div(\tilde\sigma)(s,Y_s(x))\big|^2\,d xd s\bigg)^{\frac12},
  \end{align*}
where the second inequality follows from Cauchy's inequality. It suffices to estimate
the term in the big bracket which will be denoted by $\hat K^n$. We have
  \begin{align*}
  \hat K^n&\leq 2\int_0^T \E\int_{B_R}\big|\big(\div(\tilde\sigma^n)
  -\div(\tilde\sigma)\big)(s,Y^n_s(x))\big|^2\,d xd s\cr
  &\hskip13pt +2\int_0^T \E\int_{B_R}\big|\div(\tilde\sigma)(s,Y^n_s(x))
  -\div(\tilde\sigma)(s,Y_s(x))\big|^2\,d xd s=:\hat K^n_1+\hat K^n_2.
  \end{align*}
Again we assume $p>2$ in condition \eqref{sect-1.2}. Then by H\"older's inequality
and \eqref{Jacobian-Y},
  \begin{align*}
  \hat K^n_1&\leq 2\int_0^T \bigg(\E\int_{B_R}1\,d x\bigg)^{1-\frac2p}
  \bigg(\E\int_{B_R}\big|\big(\div(\tilde\sigma^n)
  -\div(\tilde\sigma)\big)(s,Y^n_s(x))\big|^p\,d x\bigg)^{\frac2p}d s\cr
  &\leq C_{p,R}\bigg(\E\int_{\R^d}\big|\big(\div(\tilde\sigma^n)
  -\div(\tilde\sigma)\big)(s,y)\big|^p\,d y\bigg)^{\frac2p}d s\cr
  &\leq C_{p,R,q,T}\|\div(\tilde\sigma^n)-\div(\tilde\sigma)\|_{L_p^q(T)}^2
  \end{align*}
which, due to Lemma \ref{sect-2-lem-2}(ii), goes to 0 as $n$ increases to infinity.
The treatment of $\hat K^n_2$ is analogous to that of $I^n_2$, hence we omit it. The proofs
are finally completed.
\end{proof}

We are at the position of proving

\begin{proposition}[Quasi-invariance under the flow $Y_t$]\label{sect-3-prop-4}
For any $t\in[0,T]$, the push-forward $(Y_t)_\#\L^d$ of the Lebesgue measure $\L^d$ by the
flow $Y_t$ is equivalent to $\L^d$; moreover,
  $$\rho_t(x):=\frac{d(Y_t)_\#\L^d}{d\L^d}(x)=\big[\bar\rho_t\big(Y^{-1}_t(x)\big)\big]^{-1},$$
where the Radon--Nikodym density $\bar\rho_t(x)$ is defined in \eqref{density-1}.
\end{proposition}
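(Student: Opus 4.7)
The plan is to pass to the limit, in the smooth approximation $n\to\infty$, in the pathwise identity
$$\int_{\R^d} \varphi(Y^n_t(y)) \bar\rho^n_t(y)\,dy = \int_{\R^d} \varphi(y)\,dy, \quad \varphi\in C_c(\R^d),$$
which holds because $Y^n_t$ is a $C^1$-diffeomorphism with Jacobian $\bar\rho^n_t$, and then combine the limiting identity with Proposition \ref{sect-3-prop-0} to extract the density formula. Positivity of the resulting density, hence the equivalence $(Y_t)_\#\L^d \sim \L^d$, then comes for free, since $\bar\rho_t$ is defined in \eqref{density-1} as a pointwise exponential.

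First I would establish strong convergence $\bar\rho^n_t \to \bar\rho_t$ in $L^k(\Omega \times B_R)$ for every $k\in\R$ and $R>0$. Proposition \ref{sect-3-prop-3} delivers $L^1(\Omega\times B_R)$-convergence of $\sup_{t\leq T}|\,\cdot\,|$ of each of the three ingredients of the exponent of $\bar\rho^n_t$, so along a subsequence the three quantities converge $(\P\otimes\L^d)$-a.e.\ on $\Omega\times B_R$, uniformly in $t$. Continuity of $\exp$ yields pointwise convergence of $\bar\rho^n_t$, and the uniform $L^k$ bounds of Lemma \ref{sect-3-lem-2} (valid for every $k\in\R$) upgrade this to strong $L^k$ convergence via Vitali's theorem.

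Next I would take expectations in the pathwise identity and pass to the limit on the right-hand side. For each fixed $y$, the pointwise convergences $Y^n_t(y)\to Y_t(y)$ from Proposition \ref{sect-2-prop-3} and $\bar\rho^n_t(y)\to\bar\rho_t(y)$ from step one, together with uniform $L^k$-integrability, give $\E[\varphi(Y^n_t(y))\bar\rho^n_t(y)]\to\E[\varphi(Y_t(y))\bar\rho_t(y)]$ for a.e.\ $y$. Tail control on the $y$-integral uniform in $n$ comes from the change of variable $x=Y^n_t(y)$: for $\varphi$ supported in $B_R$,
$$\E\int_{|y|\geq M} \bar\rho^n_t(y)\mathbf{1}_{\{Y^n_t(y)\in B_R\}}\,dy = \E\int_{B_R}\mathbf{1}_{\{|Y^{n,-1}_t(x)|\geq M\}}\,dx,$$
which vanishes as $M\to\infty$ uniformly in $n$ by the moment bounds on $Y^{n,-1}_t$. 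Separability of $C_c(\R^d)$ then selects a common exceptional null set, giving
$$\int_{\R^d}\varphi(Y_t(y))\bar\rho_t(y)\,dy = \int_{\R^d}\varphi(y)\,dy\qquad\text{a.s., for all }\varphi\in C_c(\R^d).$$

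To conclude, extend the above identity to nonnegative measurable $\varphi\in L^1_\mathrm{loc}$ by monotone convergence and specialize to $\varphi(x)=\psi(x)[\bar\rho_t(Y^{-1}_t(x))]^{-1}$ for $\psi\in C_c(\R^d)$: this yields $\int\psi(Y_t(y))\,dy = \int\psi(x)[\bar\rho_t(Y^{-1}_t(x))]^{-1}\,dx$, which compared with $\int\psi(Y_t(y))\,dy = \int\psi(x)\rho_t(x)\,dx$ from Proposition \ref{sect-3-prop-0} forces $\rho_t(x) = [\bar\rho_t(Y^{-1}_t(x))]^{-1}$ a.s.\ and a.e. Since $\bar\rho_t$ is everywhere positive, so is $\rho_t$, and the equivalence follows. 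The hard step is the second one: coupling the random $L^k$-convergence of $\bar\rho^n_t$ with evaluation at the random diffeomorphism $Y^n_t$ while simultaneously taming the tail of the $y$-integral uniformly in $n$; the change-of-variables trick above, leveraging the Jacobian identity and the moment bounds on $Y^{n,-1}_t$, is the central technical ingredient.
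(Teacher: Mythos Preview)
Your first step (strong $L^k(\Omega\times B_R)$-convergence $\bar\rho^n_t\to\bar\rho_t$ via Proposition~\ref{sect-3-prop-3}, Lemma~\ref{sect-3-lem-2}, and Vitali) is exactly what the paper does. The problem is in your second step. You take expectations and show that for a.e.\ $y$, $\E[\varphi(Y^n_t(y))\bar\rho^n_t(y)]\to\E[\varphi(Y_t(y))\bar\rho_t(y)]$; together with your tail estimate this yields only
$$\E\int_{\R^d}\varphi(Y_t(y))\bar\rho_t(y)\,dy=\int_{\R^d}\varphi(y)\,dy,$$
an equality of \emph{expectations}, not the almost-sure identity you then assert. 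Knowing $\E Z=c$ does not give $Z=c$ a.s., and separability of $C_c(\R^d)$ cannot bridge this. What you actually need is $L^1(\P)$-convergence of the random variables $Z_n:=\int\varphi(Y^n_t(y))\bar\rho^n_t(y)\,dy$ to $Z:=\int\varphi(Y_t(y))\bar\rho_t(y)\,dy$; since each $Z_n$ equals the deterministic constant $\int\varphi$, that would force $Z=\int\varphi$ a.s. Your ingredients are in fact enough to prove this stronger convergence (estimate $\E|Z_n-Z|$ directly, split into $|y|\leq M$ and $|y|>M$, and apply your change-of-variables tail bound to both $Y^{n,-1}_t$ and $Y^{-1}_t$), but as written the argument has not done so.

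The paper avoids the tail issue entirely by a small but effective device: it passes to the limit in the \emph{two}-test-function identity
$$\int_{\R^d}\varphi\big(Y^{n,-1}_t(x)\big)\psi(x)\,dx=\int_{\R^d}\varphi(y)\psi\big(Y^n_t(y)\big)\bar\rho^n_t(y)\,dy,\qquad\varphi,\psi\in C_c(\R^d,\R_+),$$
in which both integrands are compactly supported (by $\psi$ on the left, by $\varphi$ on the right). This makes $L^1(\P)$-convergence of each side immediate from Proposition~\ref{sect-2-prop-3} and the $L^k(\Omega\times B_R)$-convergence of $\bar\rho^n_t$, with no tail analysis needed; the a.s.\ limit identity then follows at once, and the paper invokes \cite[Lemma~3.4(ii)]{Zhang10} to finish. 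Your final substitution step, plugging $\varphi=\psi\cdot[\bar\rho_t\circ Y^{-1}_t]^{-1}$ and comparing with Proposition~\ref{sect-3-prop-0}, is correct and is a perfectly good self-contained alternative to that citation.
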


\begin{proof}
By Proposition \ref{sect-3-prop-3}, there is a subsequence still denoted by $n$ such that for
$(\P\times\L^d)$-almost all $(\omega,x)$,
  \begin{equation*}
  \lim_{n\to\infty}\bar\rho^n_t(\omega,x)=\bar\rho_t(\omega,x) \quad\mbox{uniformly in } t\in[0,T].
  \end{equation*}
Taking into account the uniform bound proved in Proposition \ref{sect-3-lem-2},
we have for any $k\geq 1$ and $R>0$ that
  \begin{equation}\label{sect-3-prop-4.1}
  \lim_{n\to\infty}\E\int_{B_R}\big|\bar\rho^n_t(x)-\bar\rho_t(x)\big|^k\,d x=0.
  \end{equation}
Now for any $\varphi,\psi\in C_c(\R^d,\R_+)$, we have $\P$-a.s.,
  $$\int_{\R^d}\varphi\big(Y^{n,-1}_t(x)\big)\psi(x)\,d x
  =\int_{\R^d}\varphi(y)\psi\big(Y^{n}_t(y)\big)\bar\rho^n_t(y)\,d y\quad \mbox{for all }t\leq T.$$
By \eqref{sect-3-prop-4.1} and Proposition \ref{sect-2-prop-3}, up to a subsequence, the
two sides of the above equality are convergent in $L^1(\P)$ for any fixed $t\in[0,T]$. Thus
we get for $\P$-a.s. $\omega$,
  \begin{equation}\label{sect-3-prop-4.2}
  \int_{\R^d}\varphi\big(Y^{-1}_t(x)\big)\psi(x)\,d x
  =\int_{\R^d}\varphi(y)\psi\big(Y_t(y)\big)\bar\rho_t(y)\,d y.
  \end{equation}
Since $C_c(\R^d,\R_+)$ is separable, we can find a full set $\Omega_t\subset\Omega$
such that for all $\omega\in\Omega_t$, the above identity holds for any $\varphi,\psi\in
C_c(\R^d,\R_+)$. Noting that $\P$-a.s., $\bar\rho_t(y)$ is positive for all $(t,y)\in
[0,T]\times\R^d$, we finish the proof by applying \cite[Lemma 3.4(ii)]{Zhang10}.
\end{proof}

Finally we can prove the main result of this paper.

\begin{proof}[Proof of Theorem \ref{sect-1-thm}]
Fix any $\varphi\in C_c(\R^d)$. Since $X_t=\phi^{-1}_t(Y_t(\phi_0))$, we have $\P$-a.s.,
  \begin{align*}
  \int_{\R^d}\varphi(X_t(x))\,d x&=\int_{\R^d}\varphi\big[\phi^{-1}_t\big(Y_t(\phi_0(x))\big)\big]\,d x
  =\int_{\R^d}\varphi\big[\phi^{-1}_t\big(Y_t(y)\big)\big]\cdot\big|\det(\nabla\phi^{-1}_0(y))\big|\,d y.
  \end{align*}
Applying Propositions \ref{sect-3-prop-4} and \ref{sect-3-prop-0} leads to
  \begin{align*}
  \int_{\R^d}\varphi(X_t(x))\,d x&=\int_{\R^d}\varphi\big(\phi^{-1}_t(x)\big)\rho_t(x)
  \big|\det\big[\nabla\phi^{-1}_0\big(Y^{-1}_t(x)\big)\big]\big|\,d x\cr
  &=\int_{\R^d}\varphi(y)\rho_t(\phi_t(y))\big|\det(\nabla\phi_t(y))\big|\cdot
  \big|\det\big[\nabla\phi^{-1}_0\big(Y^{-1}_t(\phi_t(y))\big)\big]\big|\,d y\cr
  &=\int_{\R^d}\varphi(y)\big[\bar\rho_t\big(Y^{-1}_t(\phi_t(y))\big)\big]^{-1}
  \big|\det(\nabla\phi_t(y))\big|\cdot\big|\det\big[\nabla\phi^{-1}_0\big(Y^{-1}_t(\phi_t(y))\big)\big]\big|\,d y.
  \end{align*}
Therefore, for $\P$-a.s. $\omega$, $(X_t)_\#\L^d$ is absolutely continuous with respect to
$\L^d$ with the Radon--Nikodym density
  $$K_t(x):=\frac{d(Y_t)_\#\L^d}{d\L^d}(x)=\big[\bar\rho_t\big(Y^{-1}_t(\phi_t(x))\big)\big]^{-1}
  \big|\det(\nabla\phi_t(x))\big|\cdot\big|\det\big[\nabla\phi^{-1}_0\big(Y^{-1}_t(\phi_t(x))\big)\big]\big|.$$
Noticing that $\bar\rho_t$ is positive everywhere and $\nabla\phi_t(x)$ is non-degenerate
for all $x\in\R^d$, we see that the Radon--Nikodym density $K_t(x)$ is positive for all
$x\in\R^d$. Consequently, $(X_t)_\#\L^d$ is equivalent to $\L^d$; in other words,
the Lebesgue measure is quasi-invariant under the action of the flow $X_t$ generated by \eqref{SDE}.
\end{proof}

\end{document}